\newtheorem{definition}{Definition} 
\newtheorem{lemma}[definition]{Lemma}
\newtheorem{thm}[definition]{Theorem}
\newtheorem{remark}[definition]{Remark}
\newtheorem*{rep@theorem}{\rep@title}
\newcommand{\newreptheorem}[2]{%
\newenvironment{rep#1}[1]{%
 \def\rep@title{#2 \ref{##1} (restatement)}%
 \begin{rep@theorem}}%
 {\end{rep@theorem}}}
\begin{document}
\title{Law of the Iterated Logarithm for some Markov operators}

\author{Sander C. Hille}
\affiliation{Mathematical Institute, Leiden University, P.O. Box 9512, 2300 RA Leiden, The Netherlands}
\author{Katarzyna Horbacz}
\affiliation{Institute of Mathematics, University of Silesia, 40-007 Katowice, Poland}
\author{Tomasz Szarek}
\affiliation{Institute of Mathematics, University of Gda\'{n}sk, 80-952 Gda\'{n}sk, Poland}
\author{Hanna Wojew\'odka}
\affiliation{Institute of Mathematics, University of Gda\'{n}sk, 80-952 Gda\'{n}sk, Poland}
\affiliation{National Quantum Information Center of Gda\'{n}sk, 81-824 Sopot, Poland}
\affiliation{Institute of Theoretical Physics and Astrophysics, University of Gda\'{n}sk, 80-952 Gda\'{n}sk, Poland}

\thanks{T.S. was supported by the National Science Centre of Poland, grant number DEC-2012/07/B/ST1/03320}


\begin{abstract} {The Law of the Iterated Logarithm for some Markov operators, which converge exponentially to the invariant measure, is established. The operators correspond to iterated function systems which, for example, may be used to generalize the cell cycle model examined by \cite{lasotam}.}
\end{abstract}


\maketitle

\section{Introduction}
We consider some Markov operators acting on Borel measures defined on Polish spaces and corresponding to iterated function systems, which may describe e.g. the process of cell division.

One of the first cell cycle models was proposed in 1988 by J.J. Tyson and K.B. Hannsgen \cite{tyson_hannsgen}, while the full description of the research was given by A.~Murray and T.~Hunt \cite{murray_hunt}. In 1999 an~interesting result was published by A. Lasota and M.C. Mackey \cite{lasotam}. Their research was further developed by S. Hille and co-authors who proposed the generalisation of the model considered in~\cite{lasotam} and analyzed it in terms of its ergodic properties (see \cite{hille}, \cite{hhsw}), i.e. the existence of an invariant measure was established in \cite{hille}, while asymptotic stability, exponential rate of convergence to the unique invariant measure in the Fourtet-Mourier norm and the Central Limit Theorem (CLT) were proven in \cite{hhsw}. 

The aim of this paper is to verify the Law of the Iterated Logarithm (LIL), which completes the ergodic description of the generalised cell cycle model. Note that the results obtained in \cite{hhsw}, i.e. the exponential rate of convergence (see Theorem 1, \cite{hhsw}), are necessary to prove the LIL. Moreover, the variance of the normal distribution present in the thesis of the CTG (see Theorem 2, \cite{hhsw}) is~consistent with the one given in the main theorem of this paper - Theorem 2 (see Remark 1 and the proof below).

The functional form of LIL, known now as the Strassen invariant principle, was defined by V.~Strassen in 1964 \cite{strassen}. The results for martingales were further investigated in many papers (see e.g. \cite{heyde_scott}, \cite{hh} or \cite{strass}). To obtain the LIL for a~wider class of stochastic processes (i.e. for Markov processes with spectral gap in the Wasserstein metric) the martingale method due to C.C.~Heyde and D.J. Scott (Theorem 1, \cite{heyde_scott}) was used and combined with the Birkhoff individual ergodic theorem (see \cite{bms} or \cite{kom_szar}).

In this paper, however, the key role is played by the coupling measure whose construction is motivated by M. Hairer \cite{hairer}. M. Hairer proposed to build the coupling measure on the whole trajectories and use it to prove the exponential rate of convergence for some class of Markov operators (coupling measure is constructed in the same manner e.g. in \cite{sleczka} or \cite{hw}). In \cite{hhsw} we have observed that such a~coupling measure is extremely useful in the proof of the CLT. This paper shows that, in addition, it is significant to verify the LIL (see Theorem 2). 

The greatest difficulty was to prove that relevant functions are continuous. Some properties of the carefully constructed coupling measure appeared to be important in overcoming this difficulty.

The organisation of the paper goes as follows. Section 2 introduces basic notations and definitions. Most of them are adapted from \cite{billingsley}, \cite{bogachev}, \cite{rachev}, \cite{villani} or \cite{z}. Assumptions and properties of the model are stated in Section 3. We do not repeat neither the construction of the coupling measure (described in details in Sections 5-7, \cite{hhsw}), nor the proofs given in \cite{hhsw}. We restrict ourselves to recalling these facts which are necessary to prove the LIL. In the last section we finally give a~detailed proof of the LIL.

\section{Notation and basic definitions}

Let $(X,\varrho)$ be a Polish space. We denote by $B_X$ the family of all Borel subsets of $X$. 
Let $B(X)$ be the space of all bounded and measurable functions $f:X\to R$ with the supremum norm and write $C(X)$ for its subspace of all bounded and continuous functions with the supremum norm. Additionally, we consider the space $\tilde{B}(X)$ of functions $f:X\to R$ which are measurable and bounded from below.

We denote by $M(X)$ the family of all Borel measures on $X$ and by $M_{fin}(X)$ and $M_1(X)$ its subfamilies such that $\mu(X)<\infty$ and~$\mu(X)=1$, respectively. 
Elements of $M_{fin}(X)$ which satisfy $\mu(X)\leq 1$ are called sub-probability measures. To simplify notation, we write
\[\langle f,\mu\rangle =\int_X f(x)\mu(dx)\quad\text{for}\: f:X\to R,\: \mu\in M(X).\]
An operator $P:M_{fin}(X)\to M_{fin}(X)$ is called a Markov operator if
\begin{enumerate}
\item $P(\lambda_1\mu_1+\lambda_2\mu_2)=\lambda_1 P\mu_1+\lambda_2 P\mu_2\quad\text{ for }\:\lambda_1,\lambda_2\geq 0,\; \mu_1, \mu_2\in M_{fin}(X)$;
\item $P\mu(X)=\mu(X)\quad\text{ for \:$\mu\in M_{fin}(X)$}$.
\end{enumerate}
Markov operator $P$ for which there exists a linear operator $U:B(X)\to B(X)$ such that
\[\langle Uf,\mu\rangle =\langle f,P\mu\rangle \quad\text{for}\: f\in B(X),\:\mu\in M_{fin}(X)\]
is called a regular operator. Operator $U:B(X)\to B(X)$ is then called a dual operator for $P$ and it can be easily extended to $\tilde{B}(X)$. We say that a regular Markov operator is Feller if $U(C(X))\subset C(X)$. Every Markov operator $P$ may be extended to the space of signed measures on $X$ denoted by $M_{sig}(X)=\{\mu_1-\mu_2:\; \mu_1,\mu_2\in M_{fin}(X)\}$. By $\|\cdot\|$ we denote the total variation norm in $M_{sig}(X)$, i.e.
\[\|\mu\|=\mu^+(X)+\mu^-(X)\quad\text{for }\mu\in M_{sig}(X),\]
where $\mu^+$ and $\mu^-$ come from the Hahn-Jordan decomposition of $\mu$ (see \cite{halmos}). In particular, if $\mu$ is non-negative, $\|\mu\|$ is the total mass of $\mu$. 
For fixed $\bar{x}\in X$, let us introduce the function $\varrho_{\bar{x}}:X\to R$ describing the distance from the point $\bar{x}$ , i.e. $\varrho_{\bar{x}}(x)=\varrho(\bar{x},x)$ for $x\in X$. For fixed $\bar{x}\in X$ and $r>0$, we also consider the space $M_1^r(X)$ of all probability measures with finite $r$-th moment, i.e., $M_1^r(X)=\{\mu\in M_1(X):\:\int_X\varrho_{\bar{x}}^r(x)\mu(dx)<\infty\}$. The family is independent of choice of $\bar{x}\in X$. 
We call $\mu_*\in M_{fin}(X)$ an invariant measure of $P$ if $P\mu_*=\mu_*$. 
We define the support of $\mu\in M_{fin}(X)$ by
\begin{align*}
\text{supp }\mu=\{x\in X:\; \mu(B(x,r))>0\quad \text{for all }\:r>0\},
\end{align*}
where $B(x,r)$ is an open ball in $X$ with center at $x\in X$ and radius $r>0$. By $\bar{B}(x,r)$ we denote a~closed ball with center at $x\in X$ and radius $r>0$.

In $M_{sig}(X)$, we introduce the Fortet-Mourier norm
\[\|\mu\|_{\mathcal{L}}=\sup_{f\in\mathcal{L}}|\langle f,\mu\rangle |,\]
where $\mathcal{L}=\{f\in C(X):\;|f(x)-f(y)|\leq\varrho(x,y),\;|f(x)|\leq 1\;\text{ for }\:x,y\in X\}$. The space $M_1(X)$ with metric $\|\mu_1-\mu_2\|_{\mathcal{L}}$ is complete (see \cite{fortet}, \cite{rachev} or \cite{villani}).

\section{Assumptions and properties of the model}\label{sec:ass+prop}

\subsection{Assumptions}

Let $H$ be a separable Banach space. We may think of a closed subset of $H$ as a Polish space $(X,\varrho)$, where the distance $\rho$ is induced by the norm in $H$. We also condider a probability space $(\Omega,\mathcal{F},\text{\text{Prob}})$. Let $\varepsilon_*<\infty$ be given. We fix $\varepsilon\in[0,\varepsilon_*]$ and $T<\infty$. 
We consider a stochastically perturbed dynamical system of the form 
\[x_{n+1}=S(x_n,t_{n+1})+H_{n+1}\quad\text{for }n\geq 0,\]
where $(H_n)_{n\geq 1}$ is a family of independent random vectors with values in $H$ and with the same distribution $\nu^{\varepsilon}$, which is independent of $S(x_n,t_{n+1})$ and its support stays in $B(0,\varepsilon)$.
We make the following assumptions. 

\begin{itemize}
\item[(I)] We consider a sequence $(t_n)_{n\geq 1}$ of independent random variables defined on $(\Omega,\mathcal{F},\text{Prob})$ with values in $[0,T]$. Distribution of $t_{n+1}$ conditional on $x_n=x$ is given by
\begin{equation*}
\text{Prob}(t_{n+1}<t|x_n=x)=\int_0^tp(x,u)du, \quad 0\leq t\leq T,
\end{equation*}
where $p:X\times[0,T]\to[0,\infty)$ is a~measurable and non-negative function. 
In addition, $p$ is normalized, i.e. $\int_0^Tp(x,u)du=1$ for $x\in X$.

\item[(II)] Let $S:X\times[0,T]\to X$ be a~continuous function which satisfies the Lipschitz type inequality
\begin{equation*}
\varrho(S(x,t),S(y,t))\leq \lambda(x,t)\varrho(x,y)\quad\text{ for }x,y\in X, t\in[0,T],
\end{equation*}
where $\lambda:X\times[0,T]\to[0,\infty)$ is a~Borel measurable function such that 
\begin{equation}\label{def:b}
a_{2+\delta}:=\sup_{x\in X}\int_0^T\lambda^{2+\delta}(x,t)p(x,t)dt<1.
\end{equation}
Note that, due to the H\"{o}lder inequality, we also know that 
\begin{equation*}
a_1:=\sup_{x\in X}\int_0^T\lambda(x,t)p(x,t)dt\leq a_{2+\delta}^{1/(2+\delta)}<1,\quad a_2:=\sup_{x\in X}\int_0^T\lambda^2(x,t)p(x,t)dt\leq a_{2+\delta}^{2/(2+\delta)}<1.
\end{equation*}

\item[(III)] We require $\sup_{t\in[0,T]}\varrho_{\bar{x}}\Big(S(\bar{x},t)\Big)<\infty$ for some $\bar{x}\in X$ and so we can set 
\begin{align}\label{def:c}
\begin{aligned}
c:=\sup_{t\in[0,T]}\varrho_{\bar{x}}(S(\bar{x},t))+\varepsilon_*<\infty. 
\end{aligned}
\end{align}
\item[(IV)] We assume that $p$ satisfies the Dini condition
\begin{align*}
\begin{aligned}
\int_0^T|p(x,t)-p(y,t)|dt\leq \omega(\varrho(x,y))\quad\text{for }x,y\in X,
\end{aligned}
\end{align*}
where the function $\omega:{R}_+\to{R}_+$ is non-decreasing, concave and such that $\omega(0)=0$, as~well~as
\begin{align*}
\begin{aligned}
\int_0^{\sigma}\frac{\omega(t)}{t}dt<+\infty\quad\text{for some }\sigma> 0.
\end{aligned}
\end{align*}
We can easily check that if $\zeta<1$, we have
\begin{align*}
\begin{aligned}
\varphi(t)=\sum_{n=1}^{\infty}\omega(\zeta^nt)<+\infty\quad\text{for every }t\geq 0.
\end{aligned}
\end{align*}
Moreover, $\lim_{t\to 0}\varphi(t)=0$.

\item[(V)] Function $p$ is bounded. We set $M_1:=\inf_{x\in X, t\in(0,T]}p(x,t)$, $M_2:=\sup_{x\in X, t\in[0,T]}p(x,t)$ and require $M_1>0$.

\item[(VI)] Let $\nu^{\varepsilon}$ be a Borel measure on $H$ such that its support is in $\bar{B}(0,\varepsilon)$. We set
\begin{align*}
\begin{aligned}
\nu^{\varepsilon}_x(\cdot)=\nu^{\varepsilon}(\cdot-x)\quad\text{for every $x\in X$}.
\end{aligned}
\end{align*}
We assume that $S(x,t)+h\in X$ for every $t\in[0,T]$, $x\in X$ and $h$ from the support of $\nu^{\varepsilon}$. 

\end{itemize}

The Markov chain is generated by the transition function $\Pi_{\varepsilon}:X\times B_X\to[0,1]$ of the form
\[\Pi_{\varepsilon}(x,A):=\int_0^Tp(x,t)\nu^{\varepsilon}_{S(x,t)}(A)dt.\]
Note that the function $\Pi_{\varepsilon}(\cdot,A):X\to R$ is measurable for fixed $A\in B_X$ and $\Pi_{\varepsilon}(x,\cdot):B_X\to[0,1]$ is a probability measure for $x\in X$. Hence, there exists a unique regular Markov operator $P_{\varepsilon}:M_1(X)\to M_1(X)$ which is defined as follows
\[P_{\varepsilon}\mu(A):=\int_X\Pi_{\varepsilon}(x,A)\mu(dx)\]
and its dual operator $U_{\varepsilon}:B_X\to B_X$ is given by 
\[U_{\varepsilon}f(x):=\int_X f(z)\Pi_{\varepsilon}(x,dz)\]
(see Section 1.1, \cite{z}).

\subsection{Properties of the model}\label{sub-section:properties}
Let us introduce an auxiliary model. If we fix a sequence of constants $(h_n)_{n\geq 1}\subset H$, $h_n\in\bar{B}(0,\varepsilon)$, and introduce functions $T_{h_i}(x,t):=S(x,t)+h_i$, $i\geq 1$
, we may consider a stochastically perturbed dynamical system
\[\tilde{x}_{n+1}=T_{h_{n+1}}(\tilde{x}_n,t_{n+1}):=S(\tilde{x}_n,t_{n+1})+h_{n+1}\quad\text{for } n\geq 0\text{.}\]
Further, we define one-dimensional distributions
\begin{align}\label{constr:n+1}
\begin{aligned}
\Pi^0(x,A)=\delta_x(A) \\
\Pi^1_{h_i}(x,A)=\int_0^T1_A(T_{h_i}(x,t))p(x,t)dt \\
\ldots \\
\Pi^n_{h_1,\ldots,h_n}(x,A)=\int_X \Pi^1_{h_n}(y,A)\Pi^{n-1}_{h_1,\ldots,h_{n-1}}(x,dy),
\end{aligned}
\end{align}
where $A\in B_X$ and $\delta_x$ is a Dirac measure at $x\in X$. 
We easily obtain multidimensional distributions. Let $x\in X$ and $n\geq 0$. If we assume that $\Pi^{1,\ldots,n}_{h_1,\ldots,h_n}(x,\cdot)$ is a measure on $X^n$, generated by a~sequence $(\Pi^1_{h_i}(x,\cdot))_{i=1}^n$, then we can define the measure $\Pi_{h_1,\ldots,h_{n+1}}^{1,\ldots,n+1}(x,\cdot)$ on $X^{n+1}$ as the only measure which satisfies the condition
\begin{align}\label{constr:ndim}
\begin{aligned}
\Pi^{1,\ldots,n+1}_{h_1,\ldots,h_{n+1}}(x,A\times B)=\int_A\Pi^1_{h_{n+1}}(z_n,B)\Pi^{1,\ldots,n}_{h_1,\ldots,h_n}(x,dz)\quad\text{for }z=(z_1,\ldots,z_n), \:A\in B_{X^n},\: B\in B_X.
\end{aligned}
\end{align}
Finally, we obtain a~family $\{\Pi_{h_1,h_2,\ldots}^{\infty}(x,\cdot):x\in X\}$ of measures on $X^{\infty}$. 
Note that the measures $\Pi^1_{h_1}(x,\cdot),\ldots,\Pi^n_{h_1,\ldots,h_n}(x,\cdot)$, given by $(\ref{constr:n+1})$, are marginal distributions of $\Pi^{\infty}_{h_1,h_2,\ldots}(x,\cdot)$.
The existence of measure $\Pi^{\infty}_{h_1,h_2,\ldots}(x,\cdot)$ is established by the Kolmogorov theorem. More precisely, for every $x\in X$, there exists some probability space on which we can define a~stochastic process $\xi^x$ with distribution $\phi_{\xi^x}$ such that
\[\phi_{\xi^x}(B)=\text{Prob}\left(\left\{\omega\in\Omega:\xi^x(\omega)\in B\right\}\right)=\Pi_{h_1,h_2,\ldots}^{\infty}(x,B)\quad\text{ for $B\in B_{X^{\infty}}$.}\] 
Therefore, $\Pi_{h_1,h_2,\ldots}^{\infty}(x,\cdot)$ is the distribution of the non-homogeneous Markov chain $\xi^x$ on $X^{\infty}$ with sequence of transition probability functions $(\Pi^1_{h_i})_{i\in N}$  and $\phi_{\xi^x_0}=\delta_x$. 
This construction was adapted from \cite{hairer}.

Note that, for every $n\in N$ and arbitrary $A\in B_X$, $\Pi^n_{h_1,\ldots,h_n}(\cdot,A):X\to R$ is measurable by definition. 
Furthermore, $\Pi^n_{h_1,\ldots,h_n}(x,\cdot)$ is a probability measure for $x\in X$. 
Again, thanks to these properties (see Section 1.1, \cite{z}), there exists a unique regular Markov operator $P^n_{h_1,\ldots,h_n}$, for which $\Pi^n_{h_1,\ldots,h_n}$ is a~transition probability function, and it is given by the formula
\[(P^n_{h_1,\ldots,h_n}\mu)(A)=\int_X\Pi^n_{h_1,\ldots,h_n}(x,A)\mu(dx)\quad\text{for }A\in B_X,\:\mu\in M_{1}(X).\]
Moreover, its dual operator $U^n_{h_1,\ldots,h_n}$ is defined as follows
\[(U^n_{h_1\ldots,h_n}f)(x)=\int_Xf(y)\Pi^n_{h_1,\ldots,h_n}(x,dy)\quad\text{for }f\in B(X).\] 
We refer the reader to \cite{hhsw}, where a lot of useful properties of $P^n_{h_1,\ldots,h_n}$ was established. Firstly, $P^n_{h_1,\ldots,h_n}$ is a Feller operator (see Remark 1, \cite{hhsw}
). Secondly, if $\mu\in M_1^i(X)$, then also $P^n_{h_1,\ldots,h_n}\mu\in M_1^i(X)$ for $i\in\{1,2\}$, which is proven in Lemmas 1 and 5 (see \cite{hhsw}). 
All estimates in proofs of these lemmas are independent of $(h_n)_{n\geq 1}$. This is crucial, because it makes all the facts valid for $P^n_{\varepsilon}$, which follows from the relation
\begin{align}\label{varepsilon_written_by_h}
\begin{aligned}
P^n_{\varepsilon}\mu(\cdot)=
\int_X\int_{\bar{B}(0,\varepsilon)}\ldots\int_{\bar{B}(0,\varepsilon)}
\Pi^n_{h_1,\ldots,h_n}(x,\cdot)\nu^{\varepsilon}(dh_1)\ldots\nu^{\varepsilon}(dh_n)\mu(dx).
\end{aligned}
\end{align}
Hence, $P_{\varepsilon}$ has the Feller property and, if $\mu\in M^i_1(X)$, then also $P_{\varepsilon}\mu\in M^i_1(X)$ for $i\in\{1,2\}$. Moreover, the dual operator $U^n_{\varepsilon}$ to $P^n_{\varepsilon}$ is of the form
\begin{align*}
\begin{aligned}
(U^n_{\varepsilon}f)(x)
=\int_{\bar{B}(0,\varepsilon)}\ldots\int_{\bar{B}(0,\varepsilon)}\int_Xf(y)\Pi^n_{h_1,\ldots,h_n}(x,dy)
\nu^{\varepsilon}(dh_1)\ldots\nu^{\varepsilon}(dh_n)\quad\text{for }f\in B(X)
\end{aligned}
\end{align*}
and it may be extended to $\tilde{B}(X)$.

In Section 7 of \cite{hhsw} we adapt the construction introduced in \cite{hairer} and, for some fixed $x_0,y_0\in X$ and initial distribution $\delta_{(x_0,y_0,1)}$, we build an appropriate coupling measure $\hat{C}^{\infty}_{h_1,h_2,\ldots}((x_0,y_0,1),\cdot)$ on $(X^2\times\{0,1\})^{\infty}$, which has the following properties
\begin{itemize}
\item[(a)] $\Pi^*_{(X^2)^{\infty}}\hat{C}^{\infty}_{h_1,h_2,\ldots}((x_0,y_0,1),\cdot)={C}^{\infty}_{h_1,h_2,\ldots}((x_0,y_0),\cdot)$, 
where $\Pi^*_{(X^2)^{\infty}}:(X^2\times\{0,1\})^{\infty}\to (X^2)^{\infty}$ is the projection on $(X^2)^{\infty}$,
\item[(b)] ${C}^{\infty}_{h_1,h_2,\ldots}((x_0,y_0),A\times X)=\Pi_{h_1,h_2,\ldots}^{\infty}(x_0,A)$ and ${C}^{\infty}_{h_1,h_2,\ldots}((x_0,y_0),X\times B)=\Pi_{h_1,h_2,\ldots}^{\infty}(y_0,B)$ for $A,B\in \otimes_{i=1}^{\infty}B_X$,
\item[(c)] the marginals $C^n_{h_1,\ldots,h_n}((x_0,y_0),\cdot)$ of ${C}^{\infty}_{h_1,h_2,\ldots}((x_0,y_0),\cdot)$ are coupling measures on $X^2$, i.e. they couple measures $\Pi^n_{h_1,\ldots,h_n}(x_0,\cdot)$, $\Pi^n_{h_1,\ldots,h_n}(y_0,\cdot)$, given by (\ref{constr:n+1}),
\item[(d)] marginal coupling measures are related by the condition 
\begin{align}\label{relation_marginal_coupling}
C^n_{h_1,\ldots,h_n}((x_0,y_0),\cdot)
=\int_XC^1_{h_n}((z_1,z_2),\cdot)C^{n-1}_{h_1,\ldots,h_{n-1}}((x_0,y_0),dz_1\times dz_2),
\end{align}
which follows from the construction of the coupling measure on the whole trajectories (see Section 7, \cite{hhsw}).
\end{itemize}
Let $g\in B(X)$ be a Lipschitz continuous function with constant $L_g>0$. Then, it follows from Lemma~4 and Remark~2 \cite{hhsw} that there exist $q\in(0,1)$ and ${C}>0$ such that
\begin{align}\label{coupling}
\begin{aligned}
\int_{X^2}|g(u)-g(v)|(\Pi_{X^2}^*\Pi^*_n\hat{C}^{\infty}_{h_1,h_2\ldots}((x,y,1),\cdot))(du\times dv)\leq Gq^n{C}(1+\varrho_{\bar{x}}(x)+\varrho_{\bar{x}}(y)), x,y\in X, n\in N,
\end{aligned}
\end{align}
where $\Pi^*_n:(X^2\times\{0,1\})^{\infty}\to X^2\times\{0,1\}$ is the projection on the $n$-th component, $\Pi^*_{X^2}:X^2\times\{0,1\}\to X^2$ is the projection on $X^2$ and $G:=\max\{L_g,\sup_{x\in X}|g(x)|\}$.
The above inequality is crucial in the proofs of the exponential rate of convergence 
 and the CLT (see Theorems~1~and~2,~\cite{hhsw}).


Let us introduce some additional notation. Let $(x_n)_{n\geq 0}$ be a Markov chain. For a given probability measure $\mu\in M_{fin}(X)$ and a~Borel event $B\in \otimes_{i=1}^{\infty}B_X$, we write
\begin{align*}
\begin{aligned}
\text{Prob}_{\mu}(B):=\int_X\text{Prob}((x_0,x_1,\ldots)\in B|x_0=x)\mu(dx).
\end{aligned}
\end{align*}
Moreover,
\begin{align*}
\begin{aligned}
&\text{Prob}_{\mu}(x_0\in A_0,x_1\in A_1,\ldots,x_n\in A_n)\\
&=\int_{A_0}\int_{A_1}\ldots\int_{A_{n-1}}\Pi_{\varepsilon}(s_{n-1},A_n)\Pi_{\varepsilon}(s_{n-1},ds_{n-1})\ldots\Pi_{\varepsilon}(s_0,ds_1)\mu(ds_0)
\end{aligned}
\end{align*}
for $n\geq 0$ and $A_0,\ldots,A_n\in B_X$ (compare with Theorem 3.4.1, \cite{tweedie}). 
The respective expectation is denoted by $E_{\mu}$. 
For $\mu=\delta_x$, we just write $\text{Prob}_{x}$ and $E_x$. 

\begin{lemma}\label{H3}
Let $a_1$, $a_2$, $a_{2+\delta}$ be given as in Assumption~(II) and let $c$ be given as in Assumption~(III). If $\mu\in M_1^{j}(X)$, then also $P^n_{\varepsilon}\mu\in M_1^{j}(X)$ for $n\geq 1$ and $j\in \{1,2,2+\delta\}$, i.e. 
\[\sup_{n\geq 0}E_{\mu}\left(\varrho_{\bar{x}}^{j}(x_n)\right)=\sup_{n\geq 0}\int_X\varrho_{\bar{x}}^{j}(x)P^n_{\varepsilon}\mu(dx)<\infty,\]
which in stationary case means that $\varrho_{\bar{x}}^{j}\in L^1(\mu_*)$ for $j\in\{1,2,2+\delta\}$.
\end{lemma}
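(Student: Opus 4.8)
The plan is to establish the moment bound by a one-step recursion on $m_n^{(j)} := \int_X \varrho_{\bar{x}}^j(x)\, P_{\varepsilon}^n\mu(dx)$, exploiting assumptions (II)--(III) together with the elementary convexity inequalities $(s+t)^j \leq (1+\eta)^{j-1}s^j + (1+\eta^{-1})^{j-1}t^j$ for any $\eta>0$. First I would unwind one application of the transition function: since $x_{n+1} = S(x_n,t_{n+1}) + H_{n+1}$ with $\|H_{n+1}\|\leq \varepsilon_* $ almost surely and $\varrho_{\bar{x}}(S(x,t)) \leq \varrho(S(x,t),S(\bar{x},t)) + \varrho_{\bar{x}}(S(\bar{x},t)) \leq \lambda(x,t)\varrho_{\bar{x}}(x) + (c-\varepsilon_*)$ by (II) and (III), one gets the pointwise estimate
\[
\varrho_{\bar{x}}(x_{n+1}) \leq \lambda(x_n,t_{n+1})\varrho_{\bar{x}}(x_n) + c.
\]
Raising to the $j$-th power, splitting with the $\eta$-inequality, and integrating $t_{n+1}$ against $p(x_n,\cdot)$ then against $P_{\varepsilon}^n\mu$, I obtain $m_{n+1}^{(j)} \leq (1+\eta)^{j-1} a_j\, m_n^{(j)} + C_j(\eta)$, where $a_j \in \{a_1, a_2, a_{2+\delta}\}$ comes from (II).

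The crux is that $a_j < 1$ strictly for each $j\in\{1,2,2+\delta\}$ (explicitly recorded in the excerpt after \eqref{def:b}), so I may fix $\eta>0$ small enough that $r_j := (1+\eta)^{j-1}a_j < 1$. The recursion $m_{n+1}^{(j)} \leq r_j m_n^{(j)} + C_j(\eta)$ then iterates to $m_n^{(j)} \leq r_j^n m_0^{(j)} + C_j(\eta)/(1-r_j)$, which is bounded uniformly in $n$ provided $m_0^{(j)} = \int_X \varrho_{\bar{x}}^j\, d\mu < \infty$ — i.e. provided $\mu \in M_1^j(X)$. Taking the supremum over $n$ gives the stated finiteness of $\sup_{n\geq 0} E_\mu(\varrho_{\bar{x}}^j(x_n))$, hence $P_{\varepsilon}^n\mu \in M_1^j(X)$ for all $n$.

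I expect the main (though modest) obstacle to be bookkeeping the constants cleanly for the non-integer exponent $j = 2+\delta$: one must use the general real-exponent form of the convexity inequality rather than a binomial expansion, and verify that the cross term is genuinely absorbed into the additive constant (using $\sup_t \varrho_{\bar{x}}(S(\bar{x},t)) < \infty$ from (III) and $\|H_n\|\leq \varepsilon_*$). A second point to handle with care is the passage from the $h$-indexed auxiliary system to $P_{\varepsilon}^n$ via \eqref{varepsilon_written_by_h}: since all the estimates above are uniform in the perturbation sequence $(h_n)$ (indeed $\|h_n\|\leq\varepsilon \leq \varepsilon_*$ enters only through $c$), the bound for $P_{\varepsilon}^n\mu$ follows by integrating the bound for $P_{h_1,\ldots,h_n}^n\mu$ against $\nu^\varepsilon(dh_1)\cdots\nu^\varepsilon(dh_n)$. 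The stationary consequence $\varrho_{\bar{x}}^j \in L^1(\mu_*)$ is then immediate by taking $\mu = \mu_*$ (which lies in $M_1^j(X)$ because, e.g., one may start from any $\mu\in M_1^j(X)$ and pass to the limit, the bound being uniform).
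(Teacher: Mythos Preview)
Your argument is correct and follows the same overall strategy as the paper---establish a one-step contractive recursion for the $j$-th moment using assumptions (II)--(III), then iterate---but the mechanism for handling the $j$-th power of the sum differs. The paper applies the Minkowski inequality in $L^j(\varsigma)$ (with $\varsigma(dx\,dt)=p(x,t)\,dt\,\mu(dx)$) to obtain a recursion at the level of the $j$-th root,
\[
\bigl\langle \varrho_{\bar{x}}^{j},P_{h}\mu\bigr\rangle^{1/j}\leq a_j^{1/j}\bigl\langle \varrho_{\bar{x}}^{j},\mu\bigr\rangle^{1/j}+c,
\]
which iterates directly with contraction factor $a_j^{1/j}<1$ and yields the explicit bound \eqref{estim_2+delta_epsilon} without any auxiliary parameter. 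Your route instead raises to the $j$-th power first and uses the weighted convexity inequality $(s+t)^j\leq(1+\eta)^{j-1}s^j+(1+\eta^{-1})^{j-1}t^j$ to get a recursion on $m_n^{(j)}$ itself with factor $(1+\eta)^{j-1}a_j$, then chooses $\eta$ small. This is slightly more elementary (no $L^j$ structure invoked) but trades the explicit constant for a parameter choice; the paper's Minkowski approach also makes the non-integer exponent $j=2+\delta$ entirely routine, whereas you correctly flag it as a bookkeeping point. Your remark on passing from $P^n_{h_1,\ldots,h_n}$ to $P^n_\varepsilon$ via \eqref{varepsilon_written_by_h} matches the paper exactly, though in fact your pointwise bound already absorbs the perturbation through $c$, so you could work with $P_\varepsilon$ directly.
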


\begin{proof}[Proof]
Let $\mu\in M_1^{j}(X)$ for $j\in\{1,2,2+\delta\}$ and let $h\in \bar{B}(0,\varepsilon)$. Note that
\begin{align*}
\begin{aligned}
\left(\left\langle \varrho_{\bar{x}}^{j},P_h\mu\right\rangle\right)^{1/j}
&=\left(\int_X\int_X\varrho_{\bar{x}}^{j}(y)\Pi_h(x,dy)\mu(dx)\right)^{1/j}\\
&=\left(\int_X\int_0^T \varrho_{\bar{x}}^{j}(T_h(x,t))p(x,t)\,dt\,\mu(dx)\right)^{1/j}\leq\|\varrho_{\bar{x}}\circ T_h\|_{{L}^{j}(\varsigma)},
\end{aligned} 
\end{align*}
where $\| \cdot \|_{L^{j}(\varsigma)}$ is the norm in the space $L^{j}(\varsigma)$ such that
$\|f\|_{L^{j}(\varsigma)}=\left|\left\langle f^{j},\varsigma\right\rangle\right|^{1/j}$ for $f\in \tilde{B}(X\times[0,T])$ and $\varsigma\in M_{fin}(X\times[0,T])$ given by 
$\varsigma(A):=\int_{X\times[0,T]} 1_{A}((x,t))p(x,t)\,dt\,\mu(dx)$ for $A\in B_X\otimes B_{[0,T]}$. 
By Assumptions (I) and (II) we obtain
\begin{align*}
\begin{aligned}
(\varrho_{\bar{x}}\circ T_h)(x,t)=\varrho(T_h(x,t),\bar{x})\leq \varrho(T_h(x,t),T_h(\bar{x},t))+\varrho(T_h(\bar{x},t),\bar{x})\leq\lambda(x,t)\varrho_{\bar{x}}(x)+c
\end{aligned}
\end{align*}
and therefore 
\begin{align*}
\begin{aligned}
\|\varrho_{\bar{x}}\circ T_h\|_{L^{j}(\varsigma)}
\leq\left|\int_{X\times[0,T]}\lambda^{j}(x,t)\varrho_{\bar{x}}^{j}(x)p(x,t)\,dt\,\mu(dx)\right|^{1/j}+c
&\leq\left|a_j\left\langle \varrho_{\bar{x}}^{j},\mu\right\rangle\right|^{1/j} +c,
\end{aligned}
\end{align*}
which finally gives us
\begin{align*}
\begin{aligned}
\left\langle \varrho_{\bar{x}}^{j},P^n_{h_1,\ldots,h_n}\mu\right\rangle
&\leq\left(a_j^{1/j}\left|\left\langle \varrho_{\bar{x}}^{j},P^{n-1}_{h_1,\ldots,h_{n-1}}\mu\right\rangle\right|^{1/j} +c\right)^{j}\\
&\leq \left(a_j^{2/j}\left|\left\langle \varrho_{\bar{x}}^{j},P^{n-2}_{h_1,\ldots,h_{n-2}}\mu\right\rangle\right|^{1/j} +c\left(1+a_j^{1/j}\right)\right)^{j}\\
&\leq\ldots
\leq\left(a_j^{n/j}\left|\left\langle \varrho_{\bar{x}}^{j},\mu\right\rangle\right|^{1/j}+{c}{\left(1-a_j^{1/j}\right)^{-1}}\right)^{j},
\end{aligned}
\end{align*}
where $a_j^{1/j}<1$, $c<\infty$ by assumption and the estimations are independent of $(h_i)_{i\geq 1}$. Hence, we obtain
\begin{align}\label{estim_2+delta_epsilon}
\left\langle \varrho_{\bar{x}}^{j}, P^n_{\varepsilon}\mu\right\rangle \leq \left(a_j^{n/j}\left|\left\langle \varrho_{\bar{x}}^{j},\mu\right\rangle\right|^{1/j}+{c}{\left(1-a_j^{1/j}\right)^{-1}}\right)^{j}<\infty,
\end{align}
which completes the proof.
\end{proof}



\section{The law of the iterated logarithm applied to Markov chains}

\subsection{A martingale result}

We begin with presenting a classical result established in \cite{heyde_scott}. Let $(M_n)_{n\geq 0}$, defined on $(\Omega, \mathcal{F}, \text{Prob})$, be a martingale with respect to $(\mathcal{F}_n)_{n\geq 0}$, where $\mathcal{F}_0=\{\Omega,\emptyset\}$ and $\mathcal{F}_n$ is the $\sigma$-field generated by $M_1,M_2,\ldots,M_n$ for $n>0$. We call $(\mathcal{F}_n)_{n\geq 0}$ the natural filtration of $(M_n)_{n\geq 0}$. Let us define $(Z_n)_{n\geq 0}$ such that $Z_0=M_0=0$ $\text{Prob}$-a.s. 
and $Z_n=M_n-M_{n-1}$ for $n\geq 1$. Further, let $s_n^2:=EM_n^2<\infty$.

We consider the metric space $(\mathcal{C},\tilde{\varrho})$ of all real-valued continuous functions on $[0,1]$ with 
\begin{align*}
\begin{aligned}
\tilde{\varrho}(f_1,f_2)=\sup_{t\in[0,1]}|f_1(t)-f_2(t)|\quad \text{for }f_1,f_2\in \mathcal{C}.
\end{aligned}
\end{align*}
Then, we define the set $\mathcal{K}$ of all absolutely continuous functions $f\in \mathcal{C}$ such that $f(0)=0$ and $\int_0^1(f'(t))^2dt\leq 1$. The real function $F$ on $[0,\infty)$ is given by $F(s)=\sup\{n:s_n^2\leq s\}$, while the~sequence of real random functions $(\eta_n)_{n\geq 1}$ on $[0,1]$ is of the form
\begin{align*}
\begin{aligned}
\eta_n(t)=\frac{M_k+(s_n^2t-s_k^2)(s_{k+1}^2-s_k^2)^{-1}Z_{k+1}}{\sqrt{2s_n^2\log\log s_n^2}}
\end{aligned}
\end{align*}
for $n>F(e)$, where $1\leq k\leq n-1$, $\;s_k^2\leq s_n^2 t\leq s_{k+1}^2$. We put $\eta_n(t)=0$ for $n\leq F(e)$.

\begin{thm}[Theorem 1, \cite{heyde_scott}]
If $s_n^2\to\infty$ and the following conditions are fulfilled
\begin{align}\label{th1}
\begin{aligned}
&\sum_{n=1}^{\infty}s_n^{-4}E\Big(Z_n^41_{\{|Z_n|<\gamma s_n\}}\Big)< \infty\quad\text{for some }\gamma>0,
\end{aligned}
\end{align}
\begin{align}\label{th2}
\begin{aligned}
\sum_{n=1}^{\infty}s_n^{-1}E\Big(|Z_n|1_{\{|Z_n|\geq\vartheta s_n\}}\Big)<\infty\quad\text{for all }\vartheta>0,
\end{aligned}
\end{align}
\begin{align}\label{th3}
\begin{aligned}
s_n^{-2}\sum_{k=1}^nZ_k^2\to 1\quad \text{Prob}\text{-a.s., as }\;n\to\infty,
\end{aligned}
\end{align}
then $(\eta_n)_{n> F(e)}$ is relatively compact in $\mathcal{C}$ and the set of its limit points coincides with $\mathcal{K}$.
\end{thm}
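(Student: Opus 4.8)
The plan is to recognize the stated result as a verbatim citation of Theorem~1 in Heyde and Scott \cite{heyde_scott}, so strictly speaking no proof is required here; nevertheless, I would sketch the proof architecture that this theorem rests on, since the paper relies on it as the analytic engine for the LIL. The martingale increments $Z_n$ have been normalized so that $s_n^2 = E M_n^2 = \sum_{k=1}^n E Z_k^2 \to \infty$, and the three hypotheses \eqref{th1}, \eqref{th2}, \eqref{th3} are precisely the martingale analogues of the Kolmogorov--Hartman--Wintner conditions. The strategy is a \emph{reduction to the Brownian case} via the Skorokhod-type embedding for martingales (the Strassen representation): on a possibly enlarged probability space one constructs a Brownian motion $(W(t))_{t\geq 0}$ and a sequence of stopping times $0=\tau_0\leq\tau_1\leq\cdots$ such that $M_n = W(\tau_n)$ in distribution and $E(\tau_n-\tau_{n-1}\mid \mathcal{F}_{n-1}) = E(Z_n^2\mid\mathcal{F}_{n-1})$.

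The key steps I would carry out are as follows. First, use \eqref{th3} together with the embedding to show that the random time change is asymptotically linear, i.e.\ $\tau_n / s_n^2 \to 1$ almost surely; this is where condition \eqref{th3} does its work, controlling the conditional variances. Second, use the truncation conditions \eqref{th1} and \eqref{th2} to bound the size of individual increments $Z_n$ and hence of the embedding times $\tau_n - \tau_{n-1}$: \eqref{th1} controls the fourth moment of the truncated increments (giving a Borel--Cantelli bound that isolates the large jumps), while \eqref{th2} controls the tail contribution of the large increments in $L^1$, ensuring they are summable and thus finitely many matter. Together these show that the polygonal interpolation $\eta_n$ differs from the corresponding normalized Brownian path $t\mapsto W(s_n^2 t)/\sqrt{2 s_n^2\log\log s_n^2}$ by a quantity that tends to $0$ uniformly on $[0,1]$, almost surely. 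Third, invoke Strassen's functional law of the iterated logarithm for Brownian motion: the family of normalized Brownian paths is almost surely relatively compact in $(\mathcal{C},\tilde\varrho)$ with limit set exactly $\mathcal{K}$. Transferring this through the uniform approximation yields the same conclusion for $(\eta_n)_{n> F(e)}$.

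The main obstacle in such an argument is controlling the error of the Skorokhod embedding uniformly over the whole interval $[0,1]$ rather than just at the endpoints: one must rule out, almost surely, that the interpolated martingale path and the Brownian path drift apart at some intermediate time, and this requires combining the maximal inequality for martingales with the almost-sure linearity of the time change and the summability of large jumps. Once the uniform approximation is in hand, the relative compactness and the identification of the limit set follow routinely from Strassen's theorem. Since all of this is precisely the content of Theorem~1 of \cite{heyde_scott}, in the present paper it is legitimate to cite it directly; the work of the paper lies in verifying that the martingale $(M_n)$ built from the Markov chain via the coupling construction actually satisfies \eqref{th1}--\eqref{th3}, which is carried out in the final section.
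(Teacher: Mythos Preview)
Your assessment is correct: the paper does not prove this theorem at all but simply cites it as Theorem~1 of \cite{heyde_scott}, so no proof is required here, and you have identified this accurately. The sketch you provide of the Skorokhod-embedding route to Strassen's functional LIL is a faithful outline of the classical argument, but it goes beyond what the paper itself supplies; for the purposes of matching the paper, the first sentence of your proposal already suffices.
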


\subsection{Application to the model}
We consider the model initially introduced in \cite{hille} and so Assumptions (I)-(VI) are fulfilled. 
Let us consider Markov chains $(x_n)_{n\geq 0}$, $(y_n)_{n\geq 0}$ with state space $X$, transition probability function $\Pi_{\varepsilon}$ and initial distributions $\mu,\nu\in M_1^{2+\delta}(X)$, respectively. By $\mu_*$ we denote an invariant measure for the model, which exists due to Theorem 1 in \cite{hhsw}.


Further, let $g\in B(X)$ be a Lipschitz function with constant $L_g>0$. It is also assumed that $\langle g,\mu_*\rangle =0$ (otherwise we could consider $\tilde{g}=g-\langle g,\mu_*\rangle $).

Let $n\geq 0$. Note that by the Minkowski inequality in $L^{2+\delta}(P^n_{\varepsilon}\mu)$ and Lemma \ref{H3} (precisely estimation (\ref{estim_2+delta_epsilon})), we obtain
\begin{align}\label{minkowski}
\begin{aligned}
\Big(E_{\mu}\Big(|g(x_n)|^{2+\delta}\Big)\Big)^{1/(2+\delta)}&=\Big(\int_X|g(x)|^{2+\delta}P^n_{\varepsilon}\mu(dx)\Big)^{1/(2+\delta)}\\
&\leq|g(\bar{x})|+L_g\Big(\int_X\varrho_{\bar{x}}^{2+\delta}(x)P^n_{\varepsilon}\mu(dx)\Big)^{1/(2+\delta)}\\
&\leq |g(\bar{x})|+L_g\Big( \Big(\langle \varrho_{\bar{x}}^{2+\delta},\mu\rangle\Big)^{1/(2+\delta)}+{c}{\Big(1-b^{1/(2+\delta)}\Big)^{-1}}\Big)
<\infty
\end{aligned}
\end{align}
and consequently $\sup_{n\geq 0}E_{\mu}\Big(|g(x_n)|^{2+\delta}\Big)<\infty$. Let $x\in X$. By (\ref{varepsilon_written_by_h}) we have
\begin{align}\label{finite_chi}
\begin{aligned}
\sum_{i=0}^{\infty}|U^i_{\varepsilon}g(x)|&=\sum_{i=0}^{\infty}|\langle g,P^i_{\varepsilon}\delta_x\rangle -\langle g,P^i_{\varepsilon}\mu_*\rangle |\\
&\leq\sum_{i=0}^{\infty}\int_{(\bar{B}(0,\varepsilon))^i}|\langle g,P^i_{h_1,\ldots,h_i}\delta_x\rangle -\langle g,P^i_{h_1,\ldots,h_i}\mu_*\rangle |\nu^{\varepsilon}(dh_1)\ldots\nu^{\varepsilon}(dh_i).
\end{aligned}
\end{align}
Further, due to (\ref{coupling}), we obtain
\begin{align}\label{finite_chi_cd}
\begin{aligned}
|\langle g,P^i_{h_1,\ldots,h_i}\delta_x\rangle -\langle g,P^i_{h_1,\ldots,h_i}\mu_*\rangle |&\leq
\int_X\int_{X^2}|g(u)-g(v)|
(\Pi^*_{X^2}\Pi^*_i\hat{C}^{\infty}_{h_1,h_2,\ldots}((x,y,1),\cdot))
(du\times dv)\;\mu_*(dy)\\
&\leq q^iGC(1+\varrho_{\bar{x}}(x)+\langle \varrho_{\bar{x}},\mu_*\rangle ),
\end{aligned}
\end{align}
where $G:=\max\{L_g,\sup_{x\in X}|g(x)|\}$. 
Comparing (\ref{finite_chi}) and (\ref{finite_chi_cd}), we easily obtain
\begin{align}\label{fact:finite_chi}
\begin{aligned}
\sum_{i=0}^{\infty}|U^i_{\varepsilon}g(x)|\leq (1-q)^{-1}GC(1+\varrho_{\bar{x}}(x)+\langle \varrho_{\bar{x}},\mu_*\rangle )<\infty
\end{aligned}
\end{align}
and therefore we may define the function 
\begin{align}\label{def:chi}
\begin{aligned}
\chi(x):=\sum_{i=0}^{\infty}U^i_{\varepsilon}g(x)\quad \text{for }x\in X.
\end{aligned}
\end{align}

\begin{lemma}\label{lem:chi-chi}
Let us consider the function $\chi$, defined above. We have
\begin{align*}
\begin{aligned}
|\chi(x)-\chi(y)|\leq \frac{GC}{1-q}(1+\varrho_{\bar{x}}(x)+\varrho_{\bar{x}}(y))\quad\text{for }x,y\in X.
\end{aligned}
\end{align*}
\end{lemma}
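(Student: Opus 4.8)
The plan is to estimate $|\chi(x)-\chi(y)|$ termwise using the coupling construction exactly as was done for the single terms $|U^i_\varepsilon g(x)|$ in \eqref{finite_chi}--\eqref{fact:finite_chi}. First I would write, using the definition \eqref{def:chi} of $\chi$ and the fact that $\langle g,\mu_*\rangle=0$,
\begin{align*}
\begin{aligned}
|\chi(x)-\chi(y)|\leq\sum_{i=0}^{\infty}\bigl|U^i_\varepsilon g(x)-U^i_\varepsilon g(y)\bigr|
=\sum_{i=0}^{\infty}\bigl|\langle g,P^i_\varepsilon\delta_x\rangle-\langle g,P^i_\varepsilon\delta_y\rangle\bigr|,
\end{aligned}
\end{align*}
and then, by \eqref{varepsilon_written_by_h}, dominate each term by the integral over $(\bar B(0,\varepsilon))^i$ of $|\langle g,P^i_{h_1,\dots,h_i}\delta_x\rangle-\langle g,P^i_{h_1,\dots,h_i}\delta_y\rangle|$ against $\nu^\varepsilon(dh_1)\cdots\nu^\varepsilon(dh_i)$.

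Next, for fixed $h_1,\dots,h_i$ I would invoke the coupling measure $\hat C^\infty_{h_1,h_2,\dots}((x,y,1),\cdot)$ together with property (c) (the marginals $C^i_{h_1,\dots,h_i}((x,y),\cdot)$ couple $\Pi^i_{h_1,\dots,h_i}(x,\cdot)$ and $\Pi^i_{h_1,\dots,h_i}(y,\cdot)$), to get
\begin{align*}
\begin{aligned}
\bigl|\langle g,P^i_{h_1,\dots,h_i}\delta_x\rangle-\langle g,P^i_{h_1,\dots,h_i}\delta_y\rangle\bigr|
\leq\int_{X^2}|g(u)-g(v)|\,(\Pi^*_{X^2}\Pi^*_i\hat C^\infty_{h_1,h_2,\dots}((x,y,1),\cdot))(du\times dv),
\end{aligned}
\end{align*}
and then apply \eqref{coupling} directly to bound the right-hand side by $Gq^iC(1+\varrho_{\bar x}(x)+\varrho_{\bar x}(y))$. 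Here, unlike in \eqref{finite_chi_cd}, there is no need to integrate the second argument against $\mu_*$, so the bound is already in the desired form with $\varrho_{\bar x}(x)$ and $\varrho_{\bar x}(y)$ appearing symmetrically. Summing over $i\geq 0$ the geometric factors $q^i$ yields the factor $(1-q)^{-1}$, giving
\begin{align*}
\begin{aligned}
|\chi(x)-\chi(y)|\leq\frac{GC}{1-q}\bigl(1+\varrho_{\bar x}(x)+\varrho_{\bar x}(y)\bigr).
\end{aligned}
\end{align*}

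The only genuinely delicate point is making sure the coupling inequality \eqref{coupling} is applied with the correct pair of base points: \eqref{coupling} is stated for the coupling started from $(x,y,1)$ and bounds the transport cost between the chains issued from $x$ and from $y$, which is precisely what we need here, whereas in \eqref{finite_chi_cd} one of those points was the dummy integration variable $y$ later integrated out against $\mu_*$. So the present estimate is in fact simpler: one just uses \eqref{coupling} verbatim and sums. I should also note, for rigor, that interchanging the sum over $i$ with the $\nu^\varepsilon$-integrals and with the coupling integral is justified by nonnegativity (Tonelli), and that finiteness of the resulting bound — already established in spirit by \eqref{fact:finite_chi} — guarantees the series defining $\chi$ converges at both $x$ and $y$, so the termwise difference estimate is legitimate. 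No step requires more than the facts recalled in Section~\ref{sub-section:properties}.
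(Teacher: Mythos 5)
Your proposal is correct and follows essentially the same route as the paper's own proof: expand $\chi(x)-\chi(y)$ termwise, pass to the $h$-representation via \eqref{varepsilon_written_by_h}, bound each term through the coupling measure $\hat C^{\infty}_{h_1,h_2,\ldots}((x,y,1),\cdot)$ using \eqref{coupling}, and sum the geometric series to obtain the factor $(1-q)^{-1}$. The additional remarks on Tonelli and on the choice of base points are sound but not needed beyond what the paper already records.
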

\begin{proof}[Proof]
Fix $x,y\in X$. 
Following $(\ref{varepsilon_written_by_h})$ and $(\ref{coupling})$, we obtain
\begin{align*}
\begin{aligned}
&|\chi(x)-\chi(y)|\\&=\Big|\sum_{i=0}^{\infty}U^i_{\varepsilon}g(x)-\sum_{i=0}^{\infty}U^i_{\varepsilon}g(y)\Big|
\leq\sum_{i=0}^{\infty}\Big|\langle g,P^i_{\varepsilon}\delta_x\rangle -\langle g,P^i_{\varepsilon}\delta_y\rangle \Big|\\
&=\sum_{i=0}^{\infty}\int_{(\bar{B}(0,\varepsilon))^i}\int_{X^2}|g(u)-g(v)|
(\Pi^*_{X^2}\Pi^*_i\hat{C}^{\infty}_{h_1,h_2,\ldots}((x,y,1),\cdot))
(du\times dv)\;\nu^{\varepsilon}(dh_1)\ldots\nu^{\varepsilon}(dh_i)\\
&\leq \sum_{i=0}^{\infty}q^iGC(1+\varrho_{\bar{x}}(x)+\varrho_{\bar{x}}(y))
=(1-q)^{-1}GC(1+\varrho_{\bar{x}}(x)+\varrho_{\bar{x}}(y)).
\end{aligned}
\end{align*}
\end{proof}

Further, let us introduce random variables
\begin{align}\label{def:M_n}
\begin{aligned}
M_n:=\chi(x_n)-\chi(x_0)+\sum_{i=0}^{n-1}g(x_i)\quad \text{for }n\geq 0
\end{aligned}
\end{align}
and their square integrable differences which are of the form
\begin{align}\label{def:Z_n}
\begin{aligned}
Z_n=\chi(x_n)-\chi(x_{n-1})+g(x_{n-1})\quad\text{for }n\geq 1
\end{aligned}
\end{align}
and
\begin{align*}
\begin{aligned}
Z_0=0\quad\text{Prob-a.s.}
\end{aligned}
\end{align*}

\begin{lemma}
$(M_n)_{n\geq 0}$, defined by (\ref{def:M_n}), is a martingale on the space $(X^{\infty},\otimes_{i=1}^{\infty}B_X, \text{Prob}_\mu)$ with respect to its natural filtration.
\end{lemma}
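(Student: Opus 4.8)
The goal is to show that $(M_n)_{n\ge 0}$ defined by $M_n = \chi(x_n) - \chi(x_0) + \sum_{i=0}^{n-1} g(x_i)$ is a martingale with respect to the natural filtration $(\mathcal{F}_n)_{n\ge 0}$, where $\mathcal{F}_n = \sigma(x_0,\dots,x_n)$. The plan is to check the three defining properties: adaptedness, integrability, and the martingale increment identity $E_\mu(M_{n+1} - M_n \mid \mathcal{F}_n) = 0$.

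First I would address \emph{integrability}. Since $g$ is Lipschitz with constant $L_g$ and bounded, and $\chi$ satisfies the growth bound from Lemma~\ref{lem:chi-chi} (taking $y = \bar{x}$ gives $|\chi(x)| \le |\chi(\bar{x})| + \frac{GC}{1-q}(1+\varrho_{\bar{x}}(x))$), both $g$ and $\chi$ grow at most linearly in $\varrho_{\bar{x}}$. By Lemma~\ref{H3}, since $\mu \in M_1^{2+\delta}(X) \subset M_1^1(X)$, we have $\sup_{n\ge 0} E_\mu(\varrho_{\bar{x}}(x_n)) < \infty$, so each term $\chi(x_n)$, $\chi(x_0)$, $g(x_i)$ is in $L^1(\mathrm{Prob}_\mu)$ and hence $M_n \in L^1$. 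Adaptedness is immediate: $M_n$ is a measurable function of $x_0,\dots,x_n$, so $M_n$ is $\mathcal{F}_n$-measurable. (One should note the natural filtration of $(M_n)$ is contained in $(\mathcal{F}_n)$; the Markov property is what is really needed, and the conditioning below works verbatim for the natural filtration of $(M_n)$ as well, or one simply works with $(\mathcal{F}_n)$ and invokes that $M_n$ generates a coarser filtration.)

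The core computation is the \emph{increment identity}. We have $M_{n+1} - M_n = \chi(x_{n+1}) - \chi(x_n) + g(x_n) = Z_{n+1}$. Using the Markov property of $(x_n)_{n\ge 0}$ with transition function $\Pi_\varepsilon$ and dual operator $U_\varepsilon$,
\begin{align*}
E_\mu\big(\chi(x_{n+1}) \mid \mathcal{F}_n\big) = \int_X \chi(z)\,\Pi_\varepsilon(x_n, dz) = U_\varepsilon\chi(x_n).
\end{align*}
By the definition $\chi = \sum_{i=0}^\infty U_\varepsilon^i g$ and the absolute convergence established in (\ref{fact:finite_chi}), one may apply $U_\varepsilon$ term by term (justified by dominated convergence against $\Pi_\varepsilon(x_n,\cdot)$, using the integrable linear bound on the partial sums) to get $U_\varepsilon \chi = \sum_{i=0}^\infty U_\varepsilon^{i+1} g = \chi - g$. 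Hence $E_\mu(\chi(x_{n+1}) \mid \mathcal{F}_n) = \chi(x_n) - g(x_n)$, which gives $E_\mu(Z_{n+1} \mid \mathcal{F}_n) = \chi(x_n) - g(x_n) - \chi(x_n) + g(x_n) = 0$, and therefore $E_\mu(M_{n+1} \mid \mathcal{F}_n) = M_n$. Since $M_0 = 0$, this is the required martingale property.

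The main obstacle is the interchange of $U_\varepsilon$ with the infinite sum defining $\chi$, i.e.\ justifying $U_\varepsilon\chi = \chi - g$ rigorously rather than formally. This rests on (\ref{fact:finite_chi}): the partial sums $\chi_N(x) = \sum_{i=0}^N U_\varepsilon^i g(x)$ converge to $\chi(x)$ pointwise and are dominated by the $\mu$-integrable (indeed $\Pi_\varepsilon(x',\cdot)$-integrable for every $x'$, thanks to Lemma~\ref{H3} applied with $n=1$) function $(1-q)^{-1}GC(1+\varrho_{\bar{x}}(\cdot)+\langle\varrho_{\bar{x}},\mu_*\rangle)$, so dominated convergence yields $U_\varepsilon\chi_N \to U_\varepsilon\chi$; on the other hand $U_\varepsilon\chi_N = \chi_{N+1} - g \to \chi - g$. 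Everything else is bookkeeping with the Markov property and the linear growth bounds already in hand.
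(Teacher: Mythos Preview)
Your proof is correct and follows essentially the same route as the paper's: both establish the martingale property by using the Markov identity $E_\mu(\chi(x_{n+1})\mid\mathcal{F}_n)=U_\varepsilon\chi(x_n)$ and the telescoping relation $U_\varepsilon\chi=\sum_{i\ge 1}U_\varepsilon^i g=\chi-g$. Your version is simply more explicit than the paper's in checking integrability via Lemma~\ref{H3} and in justifying the interchange of $U_\varepsilon$ with the defining series for $\chi$ through dominated convergence and the bound~(\ref{fact:finite_chi}); the paper performs the same index shift but leaves these verifications implicit.
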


\begin{proof}[Proof]
Note that by the Markov property we have
\begin{align}\label{Markov_PIL}
E_{\mu}\left(g(x_{n+1})|\mathcal{F}_n\right)(\omega)
=E_{x_n(\omega)}\left(g\right)=(U_{\varepsilon}g)(x_n(\omega))
\end{align}
and therefore
\begin{align*}
\begin{aligned}
E_{\mu}\left(M_{n+1}|\mathcal{F}_n\right)&=E_{\mu}\left(\chi(x_{n+1})|\mathcal{F}_n\right)-\chi(x_0)+\sum_{i=0}^ng(x_i)\\
&=\sum_{i=0}^{\infty}U_{\varepsilon}(U^i_{\varepsilon}g)(x_n)-\chi(x_0)+\sum_{i=0}^ng(x_i)\\
&=\sum_{i=1}^{\infty}U^i_{\varepsilon}g(x_n)+U_{\varepsilon}^0g(x_n)-\chi(x_0)+\sum_{i=0}^{n-1}g(x_i)=M_n.
\end{aligned}
\end{align*}
\end{proof}

\begin{lemma}\label{prop:Z_n}
The square integrable differences $(Z_n)_{n\geq 1}$, given by (\ref{def:Z_n}), are such that $E_{\mu_*}Z_1^2<\infty$.
\end{lemma}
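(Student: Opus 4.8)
The plan is to show that $Z_1 = \chi(x_1) - \chi(x_0) + g(x_0) \in L^2(\mathrm{Prob}_{\mu_*})$ by bounding $|Z_1|$ pointwise in terms of the distance function $\varrho_{\bar x}$ and then invoking Lemma~\ref{H3} in the stationary case. First I would use the definition $(\ref{def:Z_n})$ together with the triangle inequality to write
\begin{align*}
\begin{aligned}
|Z_1| \leq |\chi(x_1) - \chi(x_0)| + |g(x_0)|.
\end{aligned}
\end{align*}
For the first term I would apply Lemma~\ref{lem:chi-chi}, which gives $|\chi(x_1)-\chi(x_0)| \leq \frac{GC}{1-q}(1+\varrho_{\bar x}(x_1)+\varrho_{\bar x}(x_0))$; for the second term I would use that $g$ is Lipschitz with constant $L_g$, so $|g(x_0)| \leq |g(\bar x)| + L_g \varrho_{\bar x}(x_0)$. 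Combining these yields a bound of the form $|Z_1| \leq A(1 + \varrho_{\bar x}(x_0) + \varrho_{\bar x}(x_1))$ for a finite constant $A$ depending only on $G$, $C$, $q$, $L_g$ and $|g(\bar x)|$.

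Next I would square this estimate and take expectation under $\mathrm{Prob}_{\mu_*}$. Since $(a+b+c)^2 \leq 3(a^2+b^2+c^2)$, it suffices to control $E_{\mu_*}(\varrho_{\bar x}^2(x_0))$ and $E_{\mu_*}(\varrho_{\bar x}^2(x_1))$. Here the key point is that $x_0$ has distribution $\mu_*$ and $x_1$ has distribution $P_\varepsilon \mu_* = \mu_*$ because $\mu_*$ is invariant; hence both expectations equal $\langle \varrho_{\bar x}^2, \mu_*\rangle$. By Lemma~\ref{H3} with $j=2$ (the stationary statement $\varrho_{\bar x}^2 \in L^1(\mu_*)$, which in turn relies on $\mu_* \in M_1^{2+\delta}(X) \subset M_1^2(X)$, itself a consequence of Theorem~1 in \cite{hhsw} and $a_2<1$), this quantity is finite. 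Therefore $E_{\mu_*} Z_1^2 < \infty$.

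I do not expect any serious obstacle here: the statement is essentially a moment bookkeeping lemma. The only mild subtlety is making sure the two ingredients it rests on are legitimately available, namely the Lipschitz-type estimate for $\chi$ from Lemma~\ref{lem:chi-chi} and the second-moment finiteness of the invariant measure from Lemma~\ref{H3}; both are established earlier in the excerpt. One should also note in passing that the same pointwise bound, applied under $\mathrm{Prob}_\mu$ for a general initial law $\mu \in M_1^{2+\delta}(X)$ together with $(\ref{estim_2+delta_epsilon})$, shows $\sup_{n\geq 1} E_\mu Z_n^2 < \infty$ and even $\sup_{n\geq 1} E_\mu|Z_n|^{2+\delta} < \infty$ — estimates that will be needed later for verifying conditions $(\ref{th1})$ and $(\ref{th2})$ — so it is natural to record the $L^2$ statement for $Z_1$ in the stationary case as the lemma does.
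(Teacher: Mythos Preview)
Your argument is correct and takes a more direct route than the paper's own proof. You use the pointwise bound $|Z_1|\leq A(1+\varrho_{\bar x}(x_0)+\varrho_{\bar x}(x_1))$ coming from Lemma~\ref{lem:chi-chi}, square, take $E_{\mu_*}$, and invoke invariance together with the stationary moment statement $\varrho_{\bar x}^2\in L^1(\mu_*)$ contained in Lemma~\ref{H3}. The paper instead works first under a general initial law $\mu\in M_1^{2+\delta}(X)$: it bounds $E_{P^n_\varepsilon\mu}(Z_1^2)$ uniformly in $n$ (inequalities~(\ref{estim:Z})--(\ref{supEZn-finite})), then reaches $E_{\mu_*}Z_1^2<\infty$ indirectly by combining the Feller property, weak convergence $P^n_\varepsilon\mu\Rightarrow\mu_*$, truncation $Z_1^2\wedge k$, and the Monotone Convergence Theorem. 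Your shortcut is legitimate because the stationary part of Lemma~\ref{H3} already hands you $\langle\varrho_{\bar x}^2,\mu_*\rangle<\infty$; the paper's longer path has the payoff of explicitly recording the uniform bound~(\ref{supEZn-finite}), which is then quoted verbatim in the proof of Lemma~\ref{lem:sn/n=sigma} to obtain $\sup_{n\geq 1}E_\mu|Z_n|^{2+\delta}<\infty$. You rightly observe that the same pointwise estimate yields those bounds as well.
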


\begin{proof}[Proof]
Let $n\geq 1$ and $\mu\in M_1^{2+\delta}(X)$. Note that, by the Markov property (see (\ref{Markov_PIL}), we obtain
\begin{align}\label{estim:Z}
\begin{aligned}
E_{\mu}(Z_{n+1}^2)=E_{P^{n}_{\varepsilon}\mu}(Z_1^2)&=\int_XE\left(\left(\chi(x_1)-\chi(x_0)+g(x_0)\right)^2|x_0=x\right)P_{\varepsilon}^n\mu(dx)\\
&\leq\int_XE\left(2\chi^2(x_1)+2(\chi-g)^2(x_0)|x_0=x\right)\:P_{\varepsilon}^n\mu(dx)\\
&= \int_X2E\left(\chi^2(x_1)|x_0=x\right)+2(\chi-g)^2(x)\:P_{\varepsilon}^n\mu(dx)\\
&\leq 2\int_X\left(U_{\varepsilon}\chi^2\right)(x)P^n_{\varepsilon}\mu(dx)
+4\int_X\chi^2(x)P^n_{\varepsilon}\mu(dx)+4\int_Xg^2(x)P^n_{\varepsilon}\mu(dx)\\
&=2\int_X\chi^2(x)P^{n+1}_{\varepsilon}\mu(dx)+4\int_X\chi^2(x)P^n_{\varepsilon}\mu(dx)+4E_{\mu}(g(x_n))^2.
\end{aligned}
\end{align}
Following $(\ref{minkowski})$, we easily obtain that the last component of (\ref{estim:Z}) is finite. Now, it is enough to~establish that $\left\langle \chi^2,P^n_{\varepsilon}\mu\right\rangle $ is finite. Note that,
\begin{align}\label{estim:chi}
\begin{aligned}
\int_X\chi^2(x)P^n_{\varepsilon}\mu(dx)&=\int_X\left(\left(\chi(x)-\chi(\bar{x})\right)+\chi(\bar{x})\right)^2P^n_{\varepsilon}\mu(dx)\\
&\leq 2\chi^2(\bar{x})+2\int_X\left(\chi(x)-\chi(\bar{x})\right)^2P^n_{\varepsilon}\mu(dx).
\end{aligned}
\end{align}
The first component of (\ref{estim:chi}) is finite due to (\ref{fact:finite_chi}) and (\ref{def:chi}). To show finiteness of the second component, let us refer to Lemma \ref{lem:chi-chi} to obtain
\begin{align}\label{estim:chi2}
\begin{aligned}
2\int_X(\chi(x)-\chi(\bar{x}))^2P^n_{\varepsilon}\mu(dx)
&\leq 2\int_X(1-q)^{-2}G^2C^2(1+\varrho_{\bar{x}}(x))^2P^n_{\varepsilon}\mu(dx)\\
&\leq 4G^2C^2(1-q)^{-2}\left(1+\left\langle \varrho^2_{\bar{x}},P^n_{\varepsilon}\mu\right\rangle \right).
\end{aligned}
\end{align}
Consequently, by (\ref{minkowski}) and (\ref{estim:Z})-(\ref{estim:chi2}) we have
\begin{align*}
\begin{aligned}
E_{P^{n}_{\varepsilon}\mu}(Z_1^2)<\tilde{C}
\left(1+\left\langle \varrho_{\bar{x}}^2,P^{n+1}_{\varepsilon}\mu\right\rangle +
\left\langle \varrho_{\bar{x}}^2,P^n_{\varepsilon}\mu\right\rangle\right) 
\end{aligned}
\end{align*}
and therefore, according to Lemma \ref{H3}, we obtain
\begin{align}\label{supEZn-finite}
\begin{aligned}
\sup_{n\geq 0}E_{P^{n}_{\varepsilon}\mu}(Z_1^2)
\leq\bar{C}\left(1+\left\langle \varrho_{\bar{x}}^2,\mu\right\rangle\right)<\infty.
\end{aligned}
\end{align}
Now, we easily check that $x\mapsto E_x(Z_1^2\wedge k)$ is a bounded countinuous function, for every $k\geq 1$. Hence, we have $\lim_{n\to\infty}E_{P^n_{\varepsilon}\mu}(Z_1^2\wedge k)=E_{\mu_*}(Z_1^2\wedge k)$, as $n\to\infty$. By (\ref{supEZn-finite}), $(E_{\mu_*}(Z_1^2\wedge k))_{k\geq 1}$ is bounded. As a~consequence, if we apply the Monotone Convergence Theorem, we finally obtain $\lim_{k\to\infty}E_{\mu_*}(Z_1^2\wedge k)=E_{\mu_*}(Z_1^2)<\infty$.
\end{proof}

Set
\begin{align}\label{def:sigma2}
\begin{aligned}
\sigma^2:=E_{\mu_*}Z_1^2.
\end{aligned}
\end{align}

\begin{lemma}\label{lem:sn/n=sigma}
Let $\mu\in M_1^{2+\delta}(X)$. If, for every $n\geq 0$, $M_n$ is given by $(\ref{def:M_n})$ and $s^2_n=E_{\mu}M_n^2<\infty$, then
\[\lim_{n\to\infty}\frac{s_n^2}{n}=\sigma^2.\]
\end{lemma}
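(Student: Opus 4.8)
The plan is to express $s_n^2 = E_\mu M_n^2$ in terms of the martingale differences, use the fact that $M_n$ is a martingale (so the cross terms vanish), and then show that the Cesàro averages of $E_\mu(Z_k^2)$ converge to $\sigma^2 = E_{\mu_*} Z_1^2$. Since $M_n = \sum_{k=1}^n Z_k$ with $(Z_k)$ martingale differences, orthogonality gives $s_n^2 = \sum_{k=1}^n E_\mu(Z_k^2)$, so it suffices to prove $\frac{1}{n}\sum_{k=1}^n E_\mu(Z_k^2) \to \sigma^2$, and by Cesàro it is enough to show $E_\mu(Z_k^2) \to \sigma^2$ as $k\to\infty$. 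By the Markov property (as in \eqref{Markov_PIL}), $E_\mu(Z_{k+1}^2) = E_{P^k_\varepsilon \mu}(Z_1^2) = \langle \psi, P^k_\varepsilon\mu\rangle$, where $\psi(x) := E_x(Z_1^2)$. So the claim reduces to $\langle \psi, P^k_\varepsilon \mu\rangle \to \langle \psi, \mu_*\rangle = \sigma^2$.

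The main obstacle is precisely that $\psi$ is not bounded and a priori not even known to be continuous, so one cannot directly invoke weak convergence $P^k_\varepsilon\mu \to \mu_*$ (in the Fortet--Mourier norm, from Theorem 1 of \cite{hhsw}) together with Feller-type continuity. The remedy is the truncation argument already rehearsed in the proof of Lemma~\ref{prop:Z_n}: work with $\psi_k(x) := E_x(Z_1^2 \wedge k)$, which \emph{is} bounded and continuous (as noted there), so that $\langle \psi_m, P^n_\varepsilon\mu\rangle \to \langle \psi_m, \mu_*\rangle$ as $n\to\infty$ for each fixed truncation level $m$. One then has to control the tail $\langle \psi - \psi_m, P^n_\varepsilon\mu\rangle = E_{P^n_\varepsilon\mu}\big((Z_1^2 - m)^+\big)$ uniformly in $n$. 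This uniform tail control is the heart of the matter.

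To obtain it I would use a uniform integrability estimate on $Z_1^2$ under the laws $P^n_\varepsilon\mu$. From the computation in Lemma~\ref{prop:Z_n}, $Z_1^2 \leq 2\chi^2(x_1) + 4\chi^2(x_0) + 4 g^2(x_0)$, and via Lemma~\ref{lem:chi-chi} each $\chi^2$ term is dominated by a constant multiple of $(1+\varrho_{\bar x}(x))^2$ or $(1+\varrho_{\bar x}(x_1))^2$; conditioning and using Assumption~(II) as in Lemma~\ref{H3}, one bounds $E_x(Z_1^2)$ by $\mathrm{const}\cdot(1+\varrho_{\bar x}^2(x))$. Crucially, Lemma~\ref{H3} gives $\langle \varrho_{\bar x}^{2+\delta}, P^n_\varepsilon\mu\rangle$ bounded uniformly in $n$, hence $E_{P^n_\varepsilon\mu}(Z_1^{2+\delta'})$ is bounded uniformly in $n$ for some $\delta' > 0$ (indeed one can push the same estimates with exponent $2+\delta$ in place of $2$). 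Uniform boundedness of a $(1+\text{something})$-moment of $Z_1^2$ of order $>1$ yields, by the de la Vallée-Poussin / Markov inequality, that $\sup_n E_{P^n_\varepsilon\mu}\big((Z_1^2 - m)^+\big) \to 0$ as $m\to\infty$.

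Putting it together: given $\eta > 0$, choose $m$ so that $\sup_n E_{P^n_\varepsilon\mu}\big((Z_1^2-m)^+\big) < \eta$ and also $E_{\mu_*}\big((Z_1^2-m)^+\big) < \eta$ (possible since $\sigma^2 < \infty$ by Lemma~\ref{prop:Z_n}); then for $n$ large, $|\langle \psi_m, P^{n-1}_\varepsilon\mu\rangle - \langle\psi_m,\mu_*\rangle| < \eta$ by continuity and Fortet--Mourier convergence, so $|E_\mu(Z_n^2) - \sigma^2| < 3\eta$. This gives $E_\mu(Z_n^2)\to\sigma^2$, and then $\frac{s_n^2}{n} = \frac1n\sum_{k=1}^n E_\mu(Z_k^2) \to \sigma^2$ by the Cesàro theorem, completing the proof. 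I expect the delicate point to be writing the uniform-in-$n$ moment bound on $Z_1^2$ cleanly enough to invoke uniform integrability; everything else is soft.
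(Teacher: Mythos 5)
Your proposal is correct and follows essentially the same route as the paper: orthogonality of martingale differences plus Ces\`{a}ro reduces the claim to $E_\mu(Z_n^2)=E_{P^{n-1}_\varepsilon\mu}(Z_1^2)\to\sigma^2$, which the paper likewise obtains by combining the uniform $(2+\delta)$-moment bound $\sup_n E_\mu|Z_n|^{2+\delta}<\infty$ (giving uniform integrability of $Z_n^2$), the truncation $Z_1^2\wedge k$ with the Feller property and weak convergence $P^n_\varepsilon\mu\to\mu_*$, and the Monotone Convergence Theorem. The only cosmetic difference is that you spell out the $\eta$-chasing more explicitly than the paper does.
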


\begin{proof}[Proof]
Following the proof of Lemma \ref{prop:Z_n} (inequalities (\ref{estim:Z})-(\ref{supEZn-finite})), we obtain
\begin{align}\label{supEZn-delta-finite}
\begin{aligned}
\sup_{n\geq 1}E_{\mu}|Z_n|^{2+\delta}<\infty.
\end{aligned}
\end{align}
Therefore,
\begin{align*}
\begin{aligned}
\sup_{n\geq 1}E_{\mu}\left(Z_n^21_{\{|Z_n|^2\geq k\}}\right)
\leq\sup_{n\geq 1}E_{\mu}\left(|Z_n|^{2+\delta}(|Z_n|^2)^{-\delta/2} 1_{\{|Z_n|^2\geq k\}}\right)
\leq k^{-\delta/2}\sup_{n\geq 1}E_{\mu}|Z_n|^{2+\delta}\to 0,
\end{aligned}
\end{align*}
as $k\to\infty$. Now, since $(Z_1^2\wedge k)$ are bounded continuous and $P_{\varepsilon}$ is Feller, we obtain 
\[\lim_{n\to\infty}E_{P^n_{\varepsilon}\mu}(Z_1^2\wedge k)=E_{\mu_*}(Z_1^2\wedge k) \quad\text{for every }k\geq 1.\]
Note that the sequence $(E_{\mu_*}(Z_1^2\wedge k))_{k\geq 1}$ is bounded and therefore the Monotone Convergence Theorem implies
\[\lim_{k\to\infty}E_{\mu_*}(Z_1^2\wedge k)=E_{\mu_*}Z_1^2=\sigma^2.\]
Hence, we also have
\begin{align*}
\lim_{n\to\infty}E_{\mu}Z_{n+1}^2=\lim_{n\to\infty}E_{P^n_{\varepsilon}\mu}Z_1^2=E_{\mu_*}Z_1^2=\sigma^2.
\end{align*}
Finally, by orthogonality of martingale differences, we obtain
\begin{align*}
\begin{aligned}
\lim_{n\to\infty}\frac{s_n^2}{n}=\lim_{n\to\infty}\frac{E_{\mu}M_n^2}{n}=\lim_{n\to\infty}\frac{\sum_{i=1}^nE_{\mu}Z_i^2}{n}=\sigma^2,
\end{aligned}
\end{align*}
which completes the proof.
\end{proof}

\begin{remark}
The variance $\sigma^2=E_{\mu_*}Z_1^2$ is compatible with the variance of limiting normal distribution in the CLT (see Theorem 2, \cite{hhsw}), i.e. with $\sigma^2=\lim_{n\to\infty}E_{\mu_*}\left(\left(S_n^*\right)^2\right)$, where $S_n^*=n^{-1/2}(g(x_0)+\ldots +g(x_{n-1}))$ for $n\in N$.
\end{remark}
\begin{proof}[Proof]
Note that 
\begin{align}\label{1}
\begin{aligned}
\lim_{n\to\infty} E_{\mu_*}\left(\left(S_n^*\right)^2\right)
&=\lim_{n\to\infty} E_{\mu_*}\left(n^{-1}\left(\sum_{i=0}^{n-1}g(x_i)\right)^2\right)\\
&=\lim_{n\to\infty} E_{\mu_*}\left(n^{-1}\left(M_n+\chi(x_0)-\chi(x_n)\right)^2\right)\\
&=\lim_{n\to\infty} E_{\mu_*}\left(n^{-1}M_n^2\right)
+\lim_{n\to\infty} 2n^{-1/2}E_{\mu_*}\left(\left(n^{-1/2}M_n\right)
\left(\chi(x_0)-\chi(x_n)\right)\right)\\
&+\lim_{n\to\infty} n^{-1}E_{\mu_*}\left(\chi(x_0)-\chi(x_n)\right)^2.
\end{aligned}
\end{align}
Referring to Lemma \ref{lem:sn/n=sigma}, we have $\lim_{n\to\infty} E_{\mu_*}\left(n^{-1}M_n^2\right)=E_{\mu_*}Z_1^2$. Further, due to Lemma \ref{lem:chi-chi} we obtain
\begin{align}\label{2}
\begin{aligned}
E_{\mu_*}\left(\chi(x_0)-\chi(x_n)\right)^2
&=\int_X\int_X\left(\chi(u)-\chi(v)\right)^2P^n_{\varepsilon}\delta_u(dv)\mu_*(du)\\
&\leq\int_X\int_XG^2C_5^2(1-q)^{-2}(1+\varrho_{\bar{x}}(u)+\varrho_{\bar{x}}(v))^2P^n_{\varepsilon}\delta_u(dv)\mu_*(du)\\
&\leq C_0\int_X\int_X(1+\varrho_{\bar{x}}^2(u)+\varrho_{\bar{x}}^2(v))P^n_{\varepsilon}\delta_u(dv)\mu_*(du)\\
&\leq C_0\int_X \left(1+\varrho_{\bar{x}}^2(u)+\left\langle \varrho_{\bar{x}}^2,P^n_{\varepsilon}\delta_u\right\rangle\right)\mu_*(du),
\end{aligned}
\end{align}
where $C_0$ is some constant. According to  (\ref{estim_2+delta_epsilon}) we obtain
\begin{align}\label{3}
E_{\mu_*}\left(\chi(x_0)-\chi(x_n)\right)^2
\leq \tilde{C}_0\int_X \left(1+\varrho_{\bar{x}}^2(u)\right)\mu_*(du)<\infty,
\end{align}
where $\tilde{C}_0$ is some positive constant. By the H\"{o}lder inequality, we get
\begin{align*}
E_{\mu_*}\left|\left(n^{-1/2}M_n\right)
\left(\chi(x_0)-\chi(x_n)\right)\right|
\leq\left(E_{\mu_*}\left(n^{-1}M_n^2\right)\right)^{1/2}
\left(E_{\mu_*}\left( \chi(x_0)-\chi(x_n)\right)^2\right)^{1/2}<\infty
\end{align*}
and therefore
\begin{align}\label{4}
\lim_{n\to\infty} 2n^{-1/2}E_{\mu_*}\left(\left(n^{-1/2}M_n\right)
\left(\chi(x_0)-\chi(x_n)\right)\right)=0.
\end{align}
Summarizing the above estimates (\ref{1})-(\ref{4}), we obtain
\begin{align*}
\lim_{n\to\infty} E_{\mu_*}\left(\left(S_n^*\right)^2\right)=E_{\mu_*}Z_1^2,
\end{align*}
which completes the proof.
\end{proof}

\begin{lemma}\label{lem1}
The square integrable martingale differences $(Z_n)_{n\geq 1}$ satisfy
\begin{align}\label{lem_sigma2}
\begin{aligned}
\frac{1}{n}\sum_{l=1}^nZ_l^2\to\sigma^2 \quad \text{Prob}_{\mu}\text{-a.s., as }n\to\infty,
\end{aligned}
\end{align}
and consequently condition $(\ref{th3})$ holds for $\sigma^2>0$.
\end{lemma}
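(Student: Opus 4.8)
The plan is to apply the Birkhoff ergodic theorem to the stationary sequence $(Z_n^2)_{n\geq 1}$ built over the invariant measure $\mu_*$, and then transfer the conclusion from the stationary initial distribution $\mu_*$ to the arbitrary $\mu\in M_1^{2+\delta}(X)$. First I would observe that $Z_n = \chi(x_n)-\chi(x_{n-1})+g(x_{n-1})$ is a fixed measurable (indeed continuous, by Lemma \ref{lem:chi-chi}) function of the pair $(x_{n-1},x_n)$, say $Z_n = \Psi(x_{n-1},x_n)$ with $\Psi(u,v)=\chi(v)-\chi(u)+g(u)$; hence $Z_n^2 = \Psi^2(x_{n-1},x_n)$ is a function of two consecutive coordinates. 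Under $\text{Prob}_{\mu_*}$ the chain $(x_n)_{n\geq 0}$ is a stationary process, so $(Z_n^2)_{n\geq 1}$ is stationary, and by Lemma \ref{prop:Z_n} it is integrable with $E_{\mu_*}Z_1^2=\sigma^2$. The Birkhoff individual ergodic theorem then gives $\frac1n\sum_{l=1}^n Z_l^2 \to E_{\mu_*}(Z_1^2\mid \mathcal{I})$ $\text{Prob}_{\mu_*}$-a.s., where $\mathcal{I}$ is the invariant $\sigma$-field of the shift on $X^\infty$.

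The key point is that this conditional expectation is the constant $\sigma^2$, which requires ergodicity of $\mu_*$ under the Markov chain. Here I would invoke the asymptotic stability of $P_\varepsilon$ established in \cite{hhsw} (Theorem 1): since $P_\varepsilon^n\mu \to \mu_*$ in the Fortet–Mourier norm for every $\mu\in M_1(X)$, the invariant measure is unique, and a unique invariant measure for a Markov chain is automatically ergodic in the sense that the invariant $\sigma$-field $\mathcal I$ is $\text{Prob}_{\mu_*}$-trivial. Consequently $E_{\mu_*}(Z_1^2\mid\mathcal I)=E_{\mu_*}Z_1^2=\sigma^2$ a.s., so $(\ref{lem_sigma2})$ holds under $\text{Prob}_{\mu_*}$.

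It remains to upgrade the a.s. convergence from the initial law $\mu_*$ to an arbitrary $\mu\in M_1^{2+\delta}(X)$. The clean way is to use that the event $B:=\{\,(x_0,x_1,\ldots): \tfrac1n\sum_{l=1}^n \Psi^2(x_{l-1},x_l)\to\sigma^2\,\}$ lies in $\otimes_{i=1}^\infty B_X$ and, being a tail-type event, satisfies $\text{Prob}_x(B)=1$ for $\mu_*$-a.e.\ $x$; since $\text{Prob}_{\mu_*}(B)=\int_X\text{Prob}_x(B)\,\mu_*(dx)=1$, the function $x\mapsto \text{Prob}_x(B)$ equals $1$ $\mu_*$-a.e. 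Using the Feller property of $P_\varepsilon$ together with asymptotic stability, $h(x):=\text{Prob}_x(B)$ is harmonic (invariant under the chain), hence $\langle h, P_\varepsilon^n\mu\rangle = \langle h,\mu\rangle$; but one also checks $h\equiv 1$ by the $0$–$1$ structure of $B$ and irreducibility-type arguments, or more simply invokes the standard fact that for a chain with a unique (hence ergodic) invariant measure every shift-invariant event has $\text{Prob}_x$-probability independent of $x$ and equal to its $\text{Prob}_{\mu_*}$-value. Either way $\text{Prob}_\mu(B)=\int_X\text{Prob}_x(B)\,\mu(dx)=1$, which is exactly $(\ref{lem_sigma2})$ under $\text{Prob}_\mu$; condition $(\ref{th3})$ follows upon dividing by $\sigma^2>0$.

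The main obstacle is the transfer step: justifying that the a.s.\ limit does not depend on the initial distribution. The cheapest rigorous route is to note that $B$ is invariant under the shift on trajectories, so $x\mapsto\text{Prob}_x(B)$ is a bounded harmonic function for $P_\varepsilon$; asymptotic stability forces every bounded harmonic function to be $\mu_*$-a.e.\ constant, and since $\text{Prob}_{\mu_*}(B)=1$ that constant is $1$, whence $\text{Prob}_\mu(B)=1$ for all $\mu\in M_1^{2+\delta}(X)$.
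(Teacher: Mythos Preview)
Your overall strategy matches the paper's: apply Birkhoff's ergodic theorem under $\text{Prob}_{\mu_*}$ and then transfer the conclusion to $\text{Prob}_\mu$. The gap is precisely in the transfer step. You argue that $h(x):=\text{Prob}_x(B)$ is bounded harmonic, hence $\mu_*$-a.e.\ equal to $1$, and then conclude $\text{Prob}_\mu(B)=\int h\,d\mu=1$. But $h=1$ $\mu_*$-a.e.\ does \emph{not} give $\int h\,d\mu=1$ unless $\mu\ll\mu_*$, which is nowhere assumed. The alternative route you sketch---use harmonicity to write $\int h\,d\mu=\int h\,d(P_\varepsilon^n\mu)$ and let $n\to\infty$---would work if $h$ were \emph{continuous}, since $P_\varepsilon^n\mu\to\mu_*$ only weakly (in the Fortet--Mourier norm); but you have not established continuity of $h$. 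The ``standard fact'' you invoke, that uniqueness of $\mu_*$ alone forces $\text{Prob}_x(B)$ to be independent of $x$ for every shift-invariant $B$, is not true in general on Polish spaces without a strong Feller or Harris-type hypothesis, neither of which is available here.

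This is exactly the difficulty the paper singles out (see the Introduction: ``The greatest difficulty was to prove that relevant functions are continuous''). The paper does not attempt to show $h$ itself is continuous; instead it works with the bounded functions
\[
x\mapsto E_x\Big(\Big|\liminf_{n\to\infty}\tfrac1n\sum_{l=1}^n Z_l^2-\sigma^2\Big|\wedge 1\Big)
\]
(and the $\limsup$ analogue), rewrites them as iterated limits $\lim_n\lim_k H_{n,k}(x)$ of explicit expectations, and then bounds $|H_{n,k}(x)-H_{n,k}(y)|$ using the coupling measure $\hat C^\infty_{h_1,h_2,\ldots}((x,y,1),\cdot)$ together with the exponential estimate~(\ref{coupling}). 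The outcome is a bound by the tail of a geometric series in $q$, which can be made arbitrarily small, yielding continuity. Once that is in hand, weak convergence $P_\varepsilon^m\mu\to\mu_*$ combined with the Birkhoff conclusion under $\mu_*$ finishes the proof along the lines you indicated. So your plan is correct in outline but elides what is in fact the main technical content of this lemma.
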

\begin{proof}[Proof]
The idea of the proof is based on the property of asymptotic stability of the model, as well as on the Birkhoff Individual Ergodic Theorem.

The essence is to show that functions
\begin{align}\label{functions}
\begin{aligned}
x\mapsto E_x\Big(\Big|{\lim\inf}_{n\to\infty}\Big(\frac{1}{n}\sum_{l=1}^nZ_l^2\Big)-\sigma^2\Big|\wedge 1\Big)\\
x\mapsto E_x\Big(\Big|{\lim\sup}_{n\to\infty}\Big(\frac{1}{n}\sum_{l=1}^nZ_l^2\Big)-\sigma^2\Big|\wedge 1\Big)
\end{aligned}
\end{align}
are not only bounded, which is obvious, but also continuous. Indeed, if continuity is provided, we use the fact that $P_{\varepsilon}^m\mu$ converges weakly to $\mu_*$, as $m\to\infty$ (see Theorem 1, \cite{hhsw}), to obtain
\begin{align}\label{convergence}
\begin{aligned}
&E_{\mu}\Big(\Big|{\lim\inf}_{n\to\infty}\Big(\frac{1}{n}\sum_{l=1}^nZ_l^2\Big)-\sigma^2\Big|\wedge 1\Big)=\int_X E_x\Big(\Big|{\lim\inf}_{n\to\infty}\Big(\frac{1}{n}\sum_{l=1}^nZ_l^2\Big)-\sigma^2\Big|\wedge 1\Big)\mu(dx)\\
&=\int_XE_x\Big(\Big|{\lim\inf}_{n\to\infty}\Big(\frac{1}{n}\sum_{l=1}^nZ_l^2\Big)-\sigma^2\Big|\wedge 1\Big)P^m_{\varepsilon}\mu(dx)
\xrightarrow{m\to\infty} E_{\mu_*}\Big(\Big|{\lim\inf}_{n\to\infty}\Big(\frac{1}{n}\sum_{l=1}^nZ_l^2\Big)-\sigma^2\Big|\wedge 1\Big).
\end{aligned}
\end{align}

Now, if we compare it with the Birkhoff Individual Ergodic Theorem, which says that
\begin{align*}
\begin{aligned}
E_{\mu_*}\Big(\Big|{\lim\inf}_{n\to\infty}\Big(\frac{1}{n}\sum_{l=1}^nZ_l^2\Big)-\sigma^2\Big|\wedge 1\Big)=0,
\end{aligned}
\end{align*}
we may claim that
\begin{align*}
\begin{aligned}
E_{\mu}\Big(\Big|{\lim\inf}_{n\to\infty}\Big(\frac{1}{n}\sum_{l=1}^nZ_l^2\Big)-\sigma^2\Big|\wedge 1\Big)=0.
\end{aligned}
\end{align*}
This, in turn, impies
\begin{align}\label{estim:inf}
\begin{aligned}
{\lim\inf}_{n\to\infty}\Big(\frac{1}{n}\sum_{l=1}^nZ_l^2\Big)=\sigma^2\quad \text{Prob}_{\mu}\text{-a.s.}
\end{aligned}
\end{align}
Analogously, we may show that
\begin{align}\label{estim:sup}
\begin{aligned}
{\lim\sup}_{n\to\infty}\Big(\frac{1}{n}\sum_{l=1}^nZ_l^2\Big)=\sigma^2\quad \text{Prob}_{\mu}\text{-a.s.}
\end{aligned}
\end{align}
Finally, (\ref{estim:inf}) and (\ref{estim:sup}) imply $(\ref{lem_sigma2})$.

To complete the proof, continuity of both functions given by $(\ref{functions})$ should be established, just to make it clear that the convergence in $(\ref{convergence})$ occurs. Note that
\begin{align}\label{min_Hnk}
\begin{aligned}
E_x\left(\left|{\lim\inf}_{n\to\infty}
\left(\frac{1}{n}\sum_{l=1}^nZ_l^2\right)-\sigma^2\right|\wedge 1\right)
&=\lim_{n\to\infty}\lim_{k\to\infty}
E_x\left(\left|\min_{n\leq j\leq n+k}
\left(\frac{1}{j}\sum_{l=1}^jZ_l^2-\sigma^2\right)\right|\wedge 1\right)\\
&=\lim_{n\to\infty}\lim_{k\to\infty}H_{n,k}(x),
\end{aligned}
\end{align}
where
\begin{align}
\begin{aligned}
H_{n,k}(x):=E_x\left(\left|\min_{n\leq j\leq n+k}
\left(\frac{1}{j}\sum_{l=1}^jZ_l^2-\sigma^2\right)\right|\wedge 1\right).
\end{aligned}
\end{align}

Let us introduce
\begin{align}\label{def:g}
\begin{aligned}
\psi_{n,k}(y_0,\ldots,y_{n+k})&=
\left| \min_{n\leq j\leq n+k} 
\left( \frac{1}{j}
\left(\sum_{l=1}^{j}\left(\chi(y_l)-\chi(y_{l-1})+g(y_{l-1})\right)^2
\wedge j\left(1+\sigma^2\right)\right)
-\sigma^2 \right) \right|.
\end{aligned}
\end{align}

Recalling the definition of martingale differences $(Z_n)_{n\geq 1}$ (see (\ref{def:Z_n})) and following the property
\begin{align*}
\begin{aligned}
\left|\min_{n\leq j\leq n+k}\left(\frac{c_j}{j}-b\right)\right|\wedge 1
=\left|\min_{n\leq j\leq n+k}\left(\frac{1}{j}(c_j\wedge j(1+b))-b\right)\right|,
\end{aligned}
\end{align*}
we obtain
\begin{align}\label{H_as_g}
\begin{aligned}
H_{n,k}(x)=E_x\left(\psi_{n,k}(x_0, x_1, \ldots, x_{n+k})\right).
\end{aligned}
\end{align}

The idea to express the functions in interest in terms of $(\ref{min_Hnk})$-$(\ref{H_as_g})$ comes from \cite{bms} or \cite{kom_szar}. However, the final step to show the continuity of functions is established thanks to the coupling measure.

As mentioned at the beginning of the section $(x_n)_{n\geq 0}$ and $(y_n)_{n\geq 0}$ are Markov chains with transition probability function $\Pi_{\varepsilon}$ and initial distributions $\mu, \nu\in M_1^{2+\delta}(X)$, respectively. In particular, we may set $\mu:=\delta_x$ and $\nu:=\delta_y$. For technical reasons, we also consider $(\tilde{x}_n)_{n\geq 0}$ and $(\tilde{y}_n)_{n\geq 0}$, which are non-homogenous Markov chains with sequence of transition probability functions $(\Pi^1_{h_i})_{i\geq 1}$, given by (\ref{constr:n+1}), and initial distributions $\delta_x$ and $\delta_y$, respectively. Note that, according to (\ref{varepsilon_written_by_h}), we obtain
\begin{align*}
\begin{aligned}
P_{\varepsilon}\delta_x(\cdot)=\int_{\bar{B}(0,\varepsilon)}\Pi^1_{h_1}(x,\cdot)\nu^{\varepsilon}(dh_1)
\end{aligned}
\end{align*}
and therefore,
\begin{align}
\begin{aligned}
E_x\psi(x_0,\ldots,x_{n+k})=\int_{(\bar{B}(0,\varepsilon))^{n+k}}E_x\psi(\tilde{x}_0,\ldots,\tilde{x}_{n+k})\;\nu^{\varepsilon}(dh_1)\ldots\nu^{\varepsilon}(dh_{n+k}).
\end{aligned}
\end{align}
Let us remind the reader that there exists the appropriate coupling measure $C^{\infty}_{h_1,h_2,\ldots}((x,y),\cdot)$ on $(X^2)^{\infty}$ such that \[C^{\infty}_{h_1,h_2,\ldots}((x,y),A\times X)=\Pi^{\infty}_{h_1,h_2,\ldots}(x,A)\quad\text{and}\quad C^{\infty}_{h_1,h_2,\ldots}((x,y),X\times B)=\Pi^{\infty}_{h_1,h_2,\ldots}(y,B)\]
for every $A,B\in \otimes_{i=1}^{\infty}B_X$, as well as the coupling measure $\hat{C}^{\infty}_{h_1,h_2,\ldots}((x,y,1),\cdot)$ on the augmented space $(X^2\times\{0,1\})^{\infty}$ (see Section 7 in \cite{hhsw} for the full construction of coupling measures for iterated function systems). The expected value according to the measure $\hat{C}^{\infty}_{h_1,h_2,\ldots}((x,y,1),\cdot)$ is denoted by $E_{x,y}$.

Let us further introduce an auxiliary 
function 
\begin{align}\label{barH_as_psi}
\begin{aligned}
\tilde{H}_{n,k}(x)=E_x\Big(\psi_{n,k}(\tilde{x}_0, \tilde{x}_1, \ldots, \tilde{x}_{n+k})\Big).
\end{aligned}
\end{align}
Then,
\begin{align}\label{barH-barH}
\begin{aligned}
\lim_{n\to\infty}\lim_{k\to\infty}
\left|\tilde{H}_{n,k}(x)-\tilde{H}_{n,k}(y)\right|
&=\lim_{n\to\infty}\lim_{k\to\infty}
\left|E_x\left(\psi(\tilde{x}_0,\ldots,\tilde{x}_{n+k})\right)-E_y\left(\psi(\tilde{y}_0,\ldots,\tilde{y}_{n+k})\right)\right|\\
&\leq 
\lim_{n\to\infty}\lim_{k\to\infty}E_{x,y}
\left|\psi(\tilde{x}_0,\ldots,\tilde{x}_{n+k})-\psi(\tilde{y}_0,\ldots,\tilde{y}_{n+k})\right|.
\end{aligned}
\end{align}

It is easy to see that
\begin{align*}
\begin{aligned}
\min_{1\leq j\leq n}\left(f_j(x_j)\right)
-\min_{1\leq j\leq n}\left(f_j(y_j)\right)\leq
\max_{1\leq i\leq n}\left|f_i(x_i)-f_i(y_i)\right|
\end{aligned}
\end{align*}
for arbitrary functions $f_i:X\to R$ and points $x_i, y_i\in X$, $1\leq i\leq n$. 
We use this fact to obtain
\begin{align}\label{estim:min_na_max}
\begin{aligned}
&\lim_{n\to\infty}\lim_{k\to\infty}\left|\tilde{H}_{n,k}(x)-\tilde{H}_{n,k}(y)\right|\\
&\leq \lim_{n\to\infty}\lim_{k\to\infty}E_{x,y}
\Bigg(\max_{n\leq i\leq n+k}\frac{1}{i}\sum_{l=1}^{i}\Big|\left(\chi(\tilde{x}_l)-\chi(\tilde{x}_{l-1})+g(\tilde{x}_{l-1})\right)^2\wedge i(1+\sigma^2)\\
&\quad
-\left(\chi(\tilde{y}_l)-\chi(\tilde{y}_{l-1})+g(\tilde{y}_{l-1})\right)^2\wedge i(1+\sigma^2)\Big|\Bigg).
\end{aligned}
\end{align}

Note that the right side of (\ref{estim:min_na_max}) is equal to
\begin{align*}
\begin{aligned}
&\lim_{n\to\infty}\lim_{k\to\infty}E_{x,y}
\Bigg(\max_{n\leq i\leq n+k}\frac{1}{i}\sum_{l=k_0}^{i}\Big|\left(\chi(\tilde{x}_l)-\chi(\tilde{x}_{l-1})+g(\tilde{x}_{l-1})\right)^2\wedge i(1+\sigma^2)\\
&\quad
-\left(\chi(\tilde{y}_l)-\chi(\tilde{y}_{l-1})+g(\tilde{y}_{l-1})\right)^2\wedge i(1+\sigma^2)\Big|\Bigg)
\end{aligned}
\end{align*}
for arbitrary $k_0\geq 1$. 
Further, due to the fact that the functions are bounded, we obtain
\begin{align}\label{estim:barH-barH_further}
\begin{aligned}
&\left|\tilde{H}_{n,k}(x)-\tilde{H}_{n,k}(y)\right|\\
&\leq 
E_{x,y}\left(
\max_{n\leq i\leq n+k}\frac{1}{i}\sum_{l=k_0}^{i}\left|\left(\chi(\tilde{x}_l)-\chi(\tilde{x}_{l-1})+g(\tilde{x}_{l-1})\right)-\left(\chi(\tilde{y}_l)-\chi(\tilde{y}_{l-1})+g(\tilde{y}_{l-1})\right)\right|2i(1+\sigma^2)\right)\\
&\leq 2(1+\sigma^2)E_{x,y}
\left(\sum_{l=k_0}^{n+k}|\chi(\tilde{x}_l)-\chi(\tilde{y}_l)|+|\chi(\tilde{x}_{l-1})-\chi(\tilde{y}_{l-1})|+|g(\tilde{x}_{l-1})-g(\tilde{y}_{l-1})|\right).
\end{aligned}
\end{align}
Let us now evaluate 
\begin{align*}
\begin{aligned}
&E_{x,y}\left|\chi(\tilde{x}_l)-\chi(\tilde{y}_l)\right|
\leq \sum_{i=0}^{\infty}E_{x,y}
\left|U_{\varepsilon}^ig(\tilde{x}_l)-U_{\varepsilon}^ig(\tilde{y}_l)\right|\\
&=\sum_{i=0}^{\infty}\int_{X^2}\left|U^i_{\varepsilon}g(u)-U^i_{\varepsilon}g(v)\right|\left(\Pi^*_{X^2}\Pi^*_l\hat{C}^{\infty}_{h_1,h_2,\ldots}((x,y,1),\cdot)\right)(du\times dv)\\
&=\sum_{i=0}^{\infty}\int_{X^2}
\left|\left\langle g,P^i_{\varepsilon}\delta_u\right\rangle
-\left\langle g,P^i_{\varepsilon}\delta_v\right\rangle\right|
\left(\Pi^*_{X^2}\Pi^*_l\hat{C}^{\infty}_{h_1,h_2,\ldots}((x,y,1),\cdot)\right)(du\times dv)\\
&=\sum_{i=0}^{\infty}\int_{X^2}
\left|\int_{\left(\bar{B}(0,\varepsilon)\right)^i}\int_{X^2}
g(z)\left(\Pi^i_{h_{l+1},\ldots,{h}_{l+i}}(u,\cdot)-\Pi^i_{{h}_{l+1},\ldots,{h}_{l+i}}(v,\cdot)\right)(dz)
\:\nu^{\varepsilon}(d{h}_{l+1})\ldots\nu^{\varepsilon}(d{h}_{l+i})\right|\\
&\quad\times
\left(\Pi^*_{X^2}\Pi^*_l\hat{C}^{\infty}_{h_1,h_2,\ldots}((x,y,1),\cdot)\right)(du\times dv)
\\
&\leq\sum_{i=0}^{\infty}\int_{\left(\bar{B}(0,\varepsilon)\right)^i}\int_{X^2}\int_{X^2}
\left|g(z_1)-g(z_2)\right|
\left(\Pi^*_{X^2}\Pi^*_i\hat{C}^{\infty}_{{h}_{l+1},{h}_{l+2},\ldots}((u,v,1),\cdot)\right)(dz_1\times dz_2)\\
&\quad\times\left(\Pi^*_{X^2}\Pi^*_l\hat{C}^{\infty}_{h_1,h_2,\ldots}((x,y,1),\cdot)\right)(du\times dv)
\:\nu^{\varepsilon}(d{h}_{l+1})\ldots\nu^{\varepsilon}(d{h}_{l+i}).
\end{aligned}
\end{align*}
The appropriate properties of coupling measure we use (see condition (d) in Section 3.2) imply the following estimation
\begin{align*}
\begin{aligned}
&E_{x,y}\left|\chi(\tilde{x}_l)-\chi(\tilde{y}_l)\right|
\\
&\leq
\sum_{i=0}^{\infty}\int_{\left(\bar{B}(0,\varepsilon)\right)^i}\int_{X^2}
\left|g(z_1)-g(z_2)\right|
\left(\Pi^*_{X^2}\Pi^*_{l+i}\hat{C}^{\infty}_{h_1,h_2,\ldots}((x,y,1),\cdot)\right)(dz_1\times dz_2)
\:\nu^{\varepsilon}(d{h}_{l+1})\ldots\nu^{\varepsilon}(d{h}_{l+i}).
\end{aligned}
\end{align*}
Hence, due to (\ref{coupling}), we obtain
\begin{align}\label{estim:chi(x_l)-chi(y_l)_bar}
\begin{aligned}
E_{x,y}\left|\chi(\tilde{x}_l)-\chi(\tilde{y}_l)\right|
&\leq \sum_{i=0}^{\infty}
\int_{\left(\bar{B}(0,\varepsilon)\right)^i}
CGq^{l+i}(1+\varrho_{\bar{x}}(x)+\varrho_{\bar{x}}(y))
\:\nu^{\varepsilon}(d{h}_{l+1})\ldots\nu^{\varepsilon}(d{h}_{l+i})\\
&\leq CG(1+\varrho_{\bar{x}}(x)+\varrho_{\bar{x}}(y))\sum_{i=l}^{\infty}q^i.
\end{aligned}
\end{align}
Simultaneously,
\begin{align}\label{estim:g-g}
\begin{aligned}
E_{x,y}|g(\tilde{x}_{l-1})-g(\tilde{y}_{l-1})|
\leq CGq^{l-1}(1+\varrho_{\bar{x}}(x)+\varrho_{\bar{x}}(y)).
\end{aligned}
\end{align}
Note that, thanks to (\ref{estim:chi(x_l)-chi(y_l)_bar}) and (\ref{estim:g-g}), the expression (\ref{estim:barH-barH_further}) may now be estimated by 
\begin{align*}
\begin{aligned}
&2(1+\sigma^2)CG(1+\varrho_{\bar{x}}(x)+\varrho_{\bar{x}}(y))\sum_{l=k_0}^{n+k}
\left(\sum_{i=l}^{\infty}q^i+\sum_{i=l-1}^{\infty}q^i+q^{l-1}\right)\\
&\qquad\qquad\qquad\qquad\leq 4(1+\sigma^2)CG(1+\varrho_{\bar{x}}(x)+\varrho_{\bar{x}}(y))\sum_{l=k_0}^{n+k}\left(q^{l-1}\sum_{i=0}^{\infty}q^i\right)\\
&\qquad\qquad\qquad\qquad=\frac{4}{1-q}(1+\sigma^2)CG(1+\varrho_{\bar{x}}(x)+\varrho_{\bar{x}}(y))\sum_{l=k_0}^{n+k}q^{l-1}.
\end{aligned}
\end{align*}
The estimate is independent of $(h_i)_{i\geq 1}$ and therefore we obtain
\begin{align*}
\begin{aligned}
\lim_{n\to\infty}\lim_{k\to\infty}|H_{n,k}(x)-H_{n,k}(y)|\leq \frac{4}{1-q}(1+\sigma^2)CG(1+\varrho_{\bar{x}}(x)+\varrho_{\bar{x}}(y))\sum_{l=k_0}^{\infty}q^{l-1}.
\end{aligned}
\end{align*}
Note that $k_0$ is arbitrary and therefore can be chosen so small that $\sum_{l=k_0}^{\infty}q^{l-1}$ is as close to zero as we wish. Then, $\lim_{n\to\infty}\lim_{k\to\infty}|H_{n,k}(x)-H_{n,k}(y)|=0$ for every $x,y\in X$. The proof is complete.


\end{proof}

\begin{lemma}\label{lem2}
Let $\sigma^2>0$. Under Assumptions (I)-(VI), the square integrable martingale differences $(Z_n)_{n\geq 1}$ satisfy conditions (\ref{th1}) and (\ref{th2}).
\end{lemma}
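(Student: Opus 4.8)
The plan is to obtain the two Lyapunov-type conditions (\ref{th1}) and (\ref{th2}) directly from two facts already at hand: the uniform moment bound $K:=\sup_{n\ge 1}E_{\mu}|Z_n|^{2+\delta}<\infty$ established in (\ref{supEZn-delta-finite}), and the relation $s_n^2/n\to\sigma^2$ from Lemma~\ref{lem:sn/n=sigma}. Since $\sigma^2>0$, the latter provides constants $0<c_1\le c_2$ and an index $N$ with $c_1 n\le s_n^2\le c_2 n$ for all $n\ge N$; in particular $s_n^2\to\infty$, so each of the two series needs to be controlled only over its tail $n\ge N$ (the finitely many remaining terms are finite, as $Z_n$ is square integrable). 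I would also note that we may assume $\delta\le 2$: by the H\"older inequality $a_{2+\delta}<1$ forces $a_{2+\delta'}<1$ for all $\delta'\in(0,\delta]$, so (\ref{supEZn-delta-finite}) and the preceding estimates stay valid after replacing $\delta$ by $\min(\delta,2)$.

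For (\ref{th1}) I would fix $\gamma=1$ and use that on $\{|Z_n|<s_n\}$, since $2-\delta\ge 0$,
\[
Z_n^4\,1_{\{|Z_n|<s_n\}}=|Z_n|^{2+\delta}\,|Z_n|^{2-\delta}\,1_{\{|Z_n|<s_n\}}\le s_n^{\,2-\delta}\,|Z_n|^{2+\delta}.
\]
Taking $E_{\mu}$ and dividing by $s_n^4$ gives $s_n^{-4}E_{\mu}(Z_n^4 1_{\{|Z_n|<s_n\}})\le K s_n^{-(2+\delta)}$, so the tail of (\ref{th1}) is bounded by $K c_1^{-(2+\delta)/2}\sum_{n\ge N}n^{-(2+\delta)/2}$, which converges because $(2+\delta)/2>1$.

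For (\ref{th2}), fix an arbitrary $\vartheta>0$. On $\{|Z_n|\ge\vartheta s_n\}$ we have $|Z_n|=|Z_n|^{2+\delta}|Z_n|^{-(1+\delta)}\le(\vartheta s_n)^{-(1+\delta)}|Z_n|^{2+\delta}$, whence $s_n^{-1}E_{\mu}(|Z_n|1_{\{|Z_n|\ge\vartheta s_n\}})\le\vartheta^{-(1+\delta)}K s_n^{-(2+\delta)}$. Summing over $n\ge N$ and using $c_1 n\le s_n^2$ again bounds the tail by $\vartheta^{-(1+\delta)}K c_1^{-(2+\delta)/2}\sum_{n\ge N}n^{-(2+\delta)/2}<\infty$; since $\vartheta>0$ was arbitrary, (\ref{th2}) holds.

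Given (\ref{supEZn-delta-finite}) and Lemma~\ref{lem:sn/n=sigma}, this argument is essentially routine and I do not anticipate a genuine obstacle. The only points that deserve a word of care are the (harmless) reduction to $\delta\le 2$, needed so that the exponent $2-\delta$ appearing in the bound for (\ref{th1}) is nonnegative, and the observation that the hypothesis $\sigma^2>0$ is precisely what guarantees $s_n\to\infty$ and makes $s_n^{-(2+\delta)}$ comparable to $n^{-(2+\delta)/2}$ --- without it, Heyde--Scott's theorem would not even apply.
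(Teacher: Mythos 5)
Your proof is correct and follows essentially the same route as the paper: both bound $Z_n^4 1_{\{|Z_n|<\gamma s_n\}}$ by $\gamma^{2-\delta}s_n^{2-\delta}|Z_n|^{2+\delta}$ and $|Z_n|1_{\{|Z_n|\ge\vartheta s_n\}}$ by $(\vartheta s_n)^{-(1+\delta)}|Z_n|^{2+\delta}$, then invoke (\ref{supEZn-delta-finite}) and Lemma \ref{lem:sn/n=sigma} to conclude $\sum_{n}s_n^{-2-\delta}<\infty$. Your explicit reduction to $\delta\le 2$ and the tail/initial-segment split are careful touches that the paper leaves implicit.
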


\begin{proof}[Proof]
Let $\mu\in M_1^{2+\delta}(X)$. Note that 
\begin{align*}
\begin{aligned}
\sum_{n=1}^{\infty}s_n^{-4}E_{\mu}\Big(Z_n^41_{\{|Z_n|<\gamma s_n\}}\Big)&\leq\sum_{n=1}^{\infty}s_n^{-4}\gamma^{2-\delta}s_n^{2-\delta}E_{\mu}|Z_n|^{2+\delta}\leq \gamma^{2-\delta}\sup_{n\geq 1}E_{\mu}|Z_n|^{2+\delta}\sum_{n=1}^{\infty}s_n^{-2-\delta}.
\end{aligned}
\end{align*}
Recall that $\sup_{n\geq 1}E_{\mu}|Z_n|^{2+\delta}<\infty$ (see (\ref{supEZn-delta-finite})). 
On the other hand, by Lemma \ref{lem:sn/n=sigma} we have $s_n^2/n\to\sigma^2$, as $n\to\infty$, which implies $\sum_{n=1}^{\infty}s_n^{-2-\delta}<\infty$ and completes the proof of condition (\ref{th1}).

To show condition (\ref{th2}), observe that 
\begin{align*}
\begin{aligned}
\sum_{n=1}^{\infty}s_n^{-1}E_{\mu}\left(|Z_n|1_{\{|Z_n|\geq \vartheta s_n\}}\right)&\leq\sum_{n=1}^{\infty}s_n^{-1}E_{\mu}\left(\frac{|Z_n|^{2+\delta}}{(\vartheta s_n)^{1+\delta}}\right)\leq \vartheta^{-1-\delta}\sup_{n\geq 1}E_{\mu}|Z_n|^{2+\delta}\sum_{n=1}^{\infty}s_n^{-2-\delta}<\infty.
\end{aligned}
\end{align*}
\end{proof}

\subsection{Main result}

The CLT is verified for the generalised cell cycle model introduced and characterised in Section \ref{sec:ass+prop} (see Theorem 2, \cite{hhsw}). Now, it is natural to ask for the proof of the LIL.

\begin{thm}\label{lil}
Let $(X,\varrho)$ be a Polish space and $(x_n)_{n\geq 0}$ a Markov chain with state space $X$, transition probability function $\Pi_{\varepsilon}$ and initial distribution $\mu\in M^{2+\delta}_1(X)$. We assume conditions (I)-(VI), which entail all properties described in Section \ref{sec:ass+prop}. If $g$ is a Lipschitz function with $\langle g,\mu_*\rangle =0$ and $\sigma^2>0$, then $\text{Prob}_{\mu}$-a.s. the sequence 
\begin{align*}
\begin{aligned}
\theta_n(t)=\frac{\sum_{i=1}^kg(x_i)+(nt-k)g(x_{k+1})}{\sigma\sqrt{2n \log\log n}}
\end{aligned}
\end{align*}
for $k\leq nt\leq k+1$, $k=1,\ldots,n-1$, $t>0$, $n>e$, and $\theta_n(t)=0$ otherwise, is relatively compact in $\mathcal{C}$ and the set of its limit points coincides with $\mathcal{K}$.
\end{thm}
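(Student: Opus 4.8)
The plan is to deduce the functional LIL for the partial sums $S_k = \sum_{i=1}^k g(x_i)$ from the martingale LIL (Theorem 1 of \cite{heyde_scott}) applied to the martingale $(M_n)_{n\ge 0}$ defined in $(\ref{def:M_n})$, and then to transfer the conclusion from the interpolated martingale process $(\eta_n)$ to the interpolated sum process $(\theta_n)$ by showing the two are uniformly close after normalization. First I would verify that the hypotheses of Theorem 1 hold for $(M_n)$: the condition $s_n^2\to\infty$ together with $s_n^2/n\to\sigma^2>0$ is exactly Lemma \ref{lem:sn/n=sigma}; conditions $(\ref{th1})$ and $(\ref{th2})$ are Lemma \ref{lem2}; and condition $(\ref{th3})$, i.e. $s_n^{-2}\sum_{k=1}^n Z_k^2\to 1$ a.s., follows by combining Lemma \ref{lem1} (which gives $n^{-1}\sum_{l=1}^n Z_l^2\to\sigma^2$ a.s.) with $s_n^2/n\to\sigma^2$. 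Hence Theorem 1 yields that $(\eta_n)_{n>F(e)}$ is relatively compact in $(\mathcal{C},\tilde\varrho)$ with set of limit points exactly $\mathcal{K}$.

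The second step is to compare $\theta_n$ with an appropriately reparametrized version of $\eta_n$. Recall $M_n = \chi(x_n)-\chi(x_0)+S_{n-1}$, so that $S_k = M_{k+1} - \chi(x_{k+1}) + \chi(x_0)$, and the normalizations differ only through $s_n^2$ versus $\sigma^2 n$. Since $s_n^2/n\to\sigma^2$, the change of clock $s_n^2 t \leftrightarrow \sigma^2 n t$ induces, uniformly in $t\in[0,1]$, a vanishing perturbation of the time variable (this is a standard Anscombe-type/Slutsky argument for C$[0,1]$-valued processes, using uniform continuity of the limiting functions in $\mathcal K$, which is a compact equicontinuous family). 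The remaining discrepancy between $\theta_n$ and $\eta_n$ is controlled by two error terms: $|\chi(x_{k+1})-\chi(x_0)|/\sqrt{2n\log\log n}$, and the difference between the piecewise-linear interpolation nodes $k$ versus $k+1$ and the genuine martingale interpolation in $\eta_n$. For the first, by Lemma \ref{lem:chi-chi} it suffices that $\varrho_{\bar x}(x_n)/\sqrt{2n\log\log n}\to 0$ $\text{Prob}_\mu$-a.s.; this follows from $\sup_n E_\mu \varrho_{\bar x}^{2+\delta}(x_n)<\infty$ (Lemma \ref{H3}, estimate $(\ref{estim_2+delta_epsilon})$) via a Borel–Cantelli argument along a geometrically growing subsequence combined with a maximal-type bound, so $\sup_{t\in[0,1]}$ of the error tends to $0$ a.s. The second error, coming from the at-most-one-term gap $|Z_{k+1}| = |\chi(x_{k+1})-\chi(x_k)+g(x_k)|$, is handled similarly since $\max_{1\le k\le n}|Z_k|/\sqrt{n}\to 0$ a.s., again a consequence of $\sup_n E_\mu|Z_n|^{2+\delta}<\infty$ from $(\ref{supEZn-delta-finite})$.

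Putting these together, $\tilde\varrho(\theta_n,\tilde\eta_n)\to 0$ $\text{Prob}_\mu$-a.s., where $\tilde\eta_n$ is $\eta_n$ reclocked to the $[0,1]$-grid $k/n$; since relative compactness and the limit-point set are invariant under uniformly vanishing perturbations in $\mathcal C$, the set of limit points of $(\theta_n)$ is also $\mathcal K$, and $(\theta_n)$ is relatively compact. I expect the main obstacle to be the careful bookkeeping in this transfer step — in particular making the reparametrization $s_n^2 t\mapsto \sigma^2 n t$ rigorous uniformly in $t$ while simultaneously absorbing the $\chi$-boundary term and the single $Z_{k+1}$ increment, since each of these requires an almost-sure negligibility statement of the form $\varrho_{\bar x}(x_n)=o(\sqrt{n\log\log n})$ that must be extracted from the moment bounds of Lemma \ref{H3} rather than being available directly. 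All other ingredients (the three martingale conditions, the continuity arguments) are already supplied by Lemmas \ref{lem:chi-chi}--\ref{lem2}.
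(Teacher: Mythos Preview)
Your plan is essentially the paper's proof: apply Heyde--Scott via Lemmas \ref{lem:sn/n=sigma}, \ref{lem1}, \ref{lem2}, change the normalizer $\sqrt{2s_n^2\log\log s_n^2}$ to $\sigma\sqrt{2n\log\log n}$, reparametrize the time axis from the grid $s_k^2/s_n^2$ to $k/n$, and then absorb the $\chi$-boundary and single-increment errors almost surely via Borel--Cantelli using the $(2+\delta)$-moment bounds. One simplification relative to what you sketch: there is no need for geometric subsequences or a maximal inequality to get $|\chi(x_k)-\chi(x_0)|/\sqrt{n}\to 0$ uniformly in $k\le n$. The paper simply bounds $\text{Prob}_\mu(|\chi(x_n)-\chi(x_0)|\ge \tfrac{\tilde\epsilon}{2}\sqrt{n})\le c_1 n^{-1-\delta/2}$ by Markov's inequality and (\ref{estim:E|chi|2delta}); this is summable in $n$, so Borel--Cantelli gives $|\chi(x_k)-\chi(x_0)|<\tfrac{\tilde\epsilon}{2}\sqrt{k}\le \tfrac{\tilde\epsilon}{2}\sqrt{n}$ for all large $k$, and the finitely many small $k$ are harmless after dividing by $\sqrt{n}$. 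The $|Z_{k+1}-g(x_{k+1})|$ term is handled identically.
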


\begin{proof}[Proof]
We begin with the observation that, since $s_n^2/n\to\sigma^2>0$, as $n\to\infty$ (see Lemma \ref{lem:sn/n=sigma}), we obtain
\begin{align*}
\begin{aligned}
\frac{\sqrt{2s^2_n\log\log s_n^2}}{\sigma\sqrt{2n\log\log n}}\to 1,\quad\text{as }n\to\infty.
\end{aligned}
\end{align*}
Hence, from Lemmas \ref{lem1} and \ref{lem2}, it follows that the sequence
\begin{align*}
\begin{aligned}
\eta_n(t)=\frac{M_k+(s_n^2t-s_k^2)(s_{k+1}^2-s_k^2)^{-1}Z_{k+1}}{\sigma\sqrt{2n\log\log n}}
\end{aligned}
\end{align*}
for $s_k^2\leq s_n^2t\leq s_{k+1}^2$, $k=1,\ldots, n-1$ and $t>0$, $n>e$, and $\eta_n(t)=0$ otherwise, is relatively compact in $\mathcal{C}$ and the set of its limit points coincides with $\mathcal{K}$ (see \cite{heyde_scott}). Let $t\in(0,1]$ and $n\geq e$. Now, if $k\leq nt\leq k+1$, then
\begin{align*}
\begin{aligned}
\frac{k\sigma^2}{s_k^2}s_k^2\leq \frac{n\sigma^2}{s_n^2}ts_n^2\leq\frac{(k+1)\sigma^2}{s_{k+1}^2}s_{k+1}^2.
\end{aligned}
\end{align*}
Set
\begin{align}
\begin{aligned}
\hat{\eta}_n(t):=\frac{M_k+(nt-k)Z_{k+1}}{\sigma\sqrt{2n\log\log n}},
\end{aligned}
\end{align}
where $k\geq 1$ is such that $k\leq nt\leq k+1$. Since $n\sigma^2/s_n^2\to 1$, as $n\to\infty$, we obtain
\begin{align*}
\begin{aligned}
(1-\tilde{\epsilon})s_k^2\leq (1+\tilde{\epsilon})s_n^2 t\leq (1+\tilde{\epsilon})^2(1-\tilde{\epsilon})^{-1}s_{k+1}^2
\end{aligned}
\end{align*}
for every $\tilde{\epsilon}>0$ and $n$ large enough. 
As a consequence, there is $t_*\in[t(1-\tilde{\epsilon})(1+\tilde{\epsilon})^{-1},t(1+\tilde{\epsilon})(1-\tilde{\epsilon})^{-1}]$ such that $s_k^2\leq s_n^2t_*\leq s_{k+1}^2$. On the other hand, the diameters of the intervals $[s_k^2/s_n^2,s_{k+1}^2/s_n^2]$ for $1\leq k\leq n-1$, converge to $0$, as $n\to\infty$. Hence, there exists $t_n>0$ such that $\hat{\eta}_n(t)=\eta_n(t_n)$ and $t_n\to t$, as $n\to\infty$. Recall that the sequence $(\eta_n(t))_{n\geq e}$ is relatively compact in $\mathcal{C}$ and the set of its limit points coincides with $\mathcal{K}$. Therefore, the sequence $(\hat{\eta}_n(t))_{n>e}$ is also relatively compact in $\mathcal{C}$ and has the same set of limit points.

Fix $\tilde{\epsilon}>0$ and define the set
\begin{align}
\begin{aligned}
A_n:=\left\{\omega\in\Omega:\;\frac{|M_n(\omega)-\sum_{i=1}^{n-1}g(x_i(\omega))|}{\sqrt{n}}\geq \frac{\tilde{\epsilon}}{2}\right\}\cup\left\{\omega\in\Omega:\;\frac{|Z_n(\omega)-g(x_n(\omega))|}{\sqrt{n}}\geq\frac{\tilde{\epsilon}}{2}\right\}\;\text{for }n\geq 1.
\end{aligned}
\end{align}
Let us now show that $\sum_{n=1}^{\infty}\text{Prob}_{\mu}(A_n)<\infty$. Choose $\epsilon>0$. Note that, by the Markov inequality and the fact that there is $\delta_1>0$ such that $(\zeta+\xi)^{2+\delta}\leq(2+\epsilon)(\zeta^{2+\delta}+\xi^{2+\delta})$ for $\zeta,\xi\geq 0$, $\delta\in(0,\delta_1)$, we obtain
\begin{align*}
\begin{aligned}
\text{Prob}_{\mu}\left(\omega\in\Omega:\;\frac{|M_n(\omega)-\sum_{i=1}^{n-1}g(x_i(\omega))|}{\sqrt{n}}\geq\frac{\tilde{\epsilon}}{2}\right)
&=\text{Prob}_{\mu}\left(\omega\in\Omega:\;\frac{|\chi(x_n(\omega))-\chi(x_0(\omega))|}{\sqrt{n}}\geq\frac{\tilde{\epsilon}}{2}\right)\\
&\leq \left(\frac{2}{\tilde{\epsilon}}\right)^{2+\delta}(2+\epsilon)\frac{E_{\mu}|\chi(x_n)|^{2+\delta}+E_{\mu}|\chi(x_0)|^{2+\delta}}{n^{1+\delta/2}}.
\end{aligned}
\end{align*}
Now, observe that, due to Lemma \ref{lem:chi-chi}
\begin{align}\label{estim:E|chi|2delta}
\begin{aligned}
E_{\mu}|\chi(x_n)|^{2+\delta}&=\int_X|\chi(x)|^{2+\delta}P^n_{\varepsilon}\mu(dx)\\
&\leq (2+\epsilon)|\chi(\bar{x})|^{2+\delta}+(2+\epsilon)\int_X|\chi(x)-\chi(\bar{x})|^{2+\delta}P^n_{\varepsilon}\mu(dx)\\
&\leq (2+\epsilon)|\chi(\bar{x})|^{2+\delta}+(2+\epsilon)^2G^{2+\delta}C^{2+\delta}(1-q)^{-(2+\delta)}(1+\langle \varrho_{\bar{x}}^{2+\delta},P^n_{\varepsilon}\mu\rangle ).
\end{aligned}
\end{align}
Note that the first component of (\ref{estim:E|chi|2delta}) is finite due to (\ref{fact:finite_chi}) and (\ref{def:chi}), while the second is finite due to Lemma \ref{H3}. Hence,
\begin{align}\label{estim:An1}
\begin{aligned}
\text{Prob}_{\mu}\left(\omega\in\Omega:\;\frac{|M_n(\omega)-\sum_{i=1}^{n-1}g(x_i(\omega))|}{\sqrt{n}}\geq\frac{\tilde{\epsilon}}{2}\right)
\leq \frac{c_1}{n^{1+\delta/2}},
\end{aligned}
\end{align}
where $c_1>$ is some constant independent of $n$. Similarly,
\begin{align}\label{estim:An2}
\begin{aligned}
\text{Prob}_{\mu}\left\{\omega\in\Omega:\;\frac{|Z_n(\omega)-g(x_n(\omega))|}{\sqrt{n}}\geq\frac{\tilde{\epsilon}}{2}\right\}
&=\text{Prob}_{\mu}\left\{\omega\in\Omega:\;\frac{|\chi(x_{n+1}(\omega))-\chi(x_{n}(\omega))|}{\sqrt{n}}\geq\frac{\tilde{\epsilon}}{2}\right\}\\
&\leq \frac{c_2}{n^{1+\delta/2}},
\end{aligned}
\end{align}
where $c_2>$ is some constant independent of $n$. By (\ref{estim:An1}) and (\ref{estim:An2}), the series $\sum_{n=1}^{\infty}\text{Prob}_{\mu}(A_n)$ is convergent. 

Finally, following the Borel-Cantelli Lemma, we obtain that $\text{Prob}_{\mu}$-a.s.
\begin{align*}
\begin{aligned}
{\lim\sup}_{n\to\infty}{\sup}_{0< t\leq 1}\left|\frac{M_k-(nt-k)Z_{k+1}}{\sigma\sqrt{2n\log\log n}}-\frac{\sum_{i=1}^kg(x_i)+(nt-k)g(x_{k+1})}{\sigma\sqrt{2n\log\log n}}\right|<\tilde{\epsilon},
\end{aligned}
\end{align*}
where $k\leq nt\leq k+1$. This implies ${\lim\sup}_{n\to\infty}\sup_{0<t\leq 1} |\tilde{\eta}_n(t)-\theta_n(t)|\leq \tilde{\epsilon}$. Since $\tilde{\epsilon}>0$ was arbitrary, the proof is complete.
\end{proof}

\vspace{10mm}

\end{document}